\def\qed{{\quad\rule{1mm}{3mm}\,}}
\begin{document}

\pgfdeclarelayer{background}
\pgfdeclarelayer{foreground}
\pgfsetlayers{background,main,foreground}

\newtheorem{thm}{Theorem}
\newtheorem*{thm*}{Theorem}
\newtheorem{cor}[thm]{Corollary}
\newtheorem{lmm}[thm]{Lemma}
\newtheorem{conj}[thm]{Conjecture}
\newtheorem{pro}[thm]{Proposition}
\newtheorem*{pro*}{Proposition}
\theoremstyle{definition}\newtheorem{Def}{Definition}
\theoremstyle{remark}\newtheorem{Rem}{Remark}

\title{Counting Phylogenetic Networks with Few Reticulation Vertices: Galled and Reticulation-Visible Networks}
\author{Yu-Sheng Chang and Michael Fuchs\\
    Department of Mathematical Sciences\\
    National Chengchi University\\
    Taipei 116\\
    Taiwan}
\maketitle

\begin{abstract}
We give exact and asymptotic counting results for the number of galled networks and reticulation-visible networks with few reticulation vertices. Our results are obtained with the component graph method, which was introduced by L. Zhang and his coauthors, and generating function techniques. For galled networks, we in addition use analytic combinatorics. Moreover, in an appendix, we consider maximally reticulated reticulation-visible networks and derive their number, too.
\end{abstract}

\section{Introduction and Results}\label{intro}

This is the fourth of a series of papers which is concerned with the enumeration of phylogenetic networks with few reticulation vertices from a fixed class of phylogenetic networks. In the first three papers, we considered {\it normal} and {\it tree-child} networks; see \cite{FuGiMa1,FuGiMa2,FuHuYu}. More precisely, in \cite{FuGiMa1,FuHuYu}, we proposed two methods for deriving asymptotic counting results when the number of reticulation vertices is fixed and the number of leaves tends to infinity. In \cite{FuGiMa2}, we corrected mistakes from the approach from \cite{FuGiMa1} and derived exact counting formulas. For instance, one of the results of \cite{FuGiMa2} reads as follows.

\begin{thm*}[\cite{FuGiMa2}]
For the number $N_{\ell,2}$ of normal networks with $\ell$ leaves and two reticulation vertices,
\[
N_{\ell,2}=\frac{(3\ell-4)(\ell^2+11\ell+6)}{3}(2\ell-1)!!-2^{\ell}(\ell+2)(3\ell-4)\ell!.
\]
\end{thm*}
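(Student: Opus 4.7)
The plan is to use the \emph{component graph} method. For a normal phylogenetic network $N$ with $\ell$ leaves and exactly two reticulation vertices, one removes the two edges entering each reticulation to split $N$ into tree components; the quotient object recording how these tree components are connected via reticulation edges is the component graph. With only two reticulations, the number of possible shapes of the component graph is small and can be enumerated by inspection, once one imposes the tree-child and shortcut-free conditions defining normality.

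First, I would enumerate every possible component-graph shape, classifying them by how many tree components arise and by how the two reticulations sit relative to each other (in series, in parallel, sharing a parent, sharing a child, etc.). For each shape I would then count the number of normal networks with $\ell$ leaves realizing it, by (i) distributing the $\ell$ leaves among the tree components with a multinomial factor, (ii) choosing a rooted binary phylogenetic tree on the leaf set of each component, which contributes a factor of double-factorial type, and (iii) picking the attachment positions of the reticulation edges inside the tree components, subject to the normality constraints that forbid shortcut edges and double reticulation children.

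Second, I would sum over the shapes using generating-function identities; in particular, the exponential generating function $T(z)=\sum_{n\ge0}(2n-1)!!\,z^n/n!$ for rooted binary phylogenetic trees naturally transforms the sums of $\binom{\ell}{a,b,c}\prod(2n_i-1)!!$ type into closed form, which is what ultimately produces the $(2\ell-1)!!$ prefactor in the first term of the stated formula. The $2^\ell\,\ell!$ factor in the subtracted second term looks like it arises either from a degenerate caterpillar-like shape in which the internal trees are severely constrained, or more likely as an inclusion–exclusion correction cleaning up the normality constraints.

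The main obstacle will be correctly enforcing normality in step (iii) without double-counting. The fact that the final expression is a difference of two products, with $2^\ell\,\ell!$ carrying a minus sign, strongly suggests that inclusion–exclusion is unavoidable: one first overcounts by attaching reticulation edges freely, then subtracts off the configurations that create a shortcut edge or violate the tree-child property. Getting this bookkeeping right—including subtle interactions where the two reticulations jointly cause or cancel a violation—and then simplifying the resulting hypergeometric-style sums down to the stated closed form will be the most delicate and error-prone part of the argument. Indeed, the introduction explicitly flags that \cite{FuGiMa2} was partly written to correct such mistakes from \cite{FuGiMa1}, which reinforces the expectation that this step is where the real technical work lives.
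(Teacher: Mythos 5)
Your proposal is a plan rather than a proof, and the step you yourself flag as ``the most delicate and error-prone part'' --- enforcing the normality constraints (tree-child plus no shortcuts) when the reticulation edges are re-attached inside the tree components --- is exactly the step that is missing and exactly the step on which this approach is known to stall. Note also that the paper does not prove this statement; it quotes it from \cite{FuGiMa2} and explicitly says that it was derived there \emph{via generating function techniques}, in deliberate contrast to the component graph method of \cite{CaZh}. Indeed, \cite{CaZh} applied the component graph method to tree-child networks and left the exact enumeration of normal networks as an open problem, precisely because the no-shortcut condition is not a property of the component graph: whether the two parents of a reticulation vertex are in an ancestor--descendant relationship depends on \emph{where inside a tree component} the two incoming reticulation edges attach, not merely on which components they attach to. The clean characterizations that make the component graph method effective in this paper (component graph a tree for galled networks, a restricted tree-child network for reticulation-visible networks, via \cite{GuYaZh}) have no stated analogue for normal networks, so your steps (i)--(iii) cannot be organized as a product over component-graph shapes without a substantial new combinatorial lemma controlling shortcut-free attachments within a single one-component network.

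Concretely, what is absent is any count, for a fixed tree component with $n$ leaves, of the number of admissible ordered pairs of attachment points for the two reticulation edges that avoid creating a shortcut and avoid giving a tree vertex two reticulation children, together with the inclusion--exclusion over the cases where the two reticulation vertices interact (shared parent component, nested tree cycles, one reticulation's parent lying on the path to the other's, etc.). Your guess that the $2^{\ell}\ell!$ term ``looks like'' an inclusion--exclusion correction is not a derivation; in the generating-function route of \cite{FuGiMa2} such terms arise from explicit subtracted generating functions with a $(1-2z)^{-1/2}$-free part (compare the $2^{\ell-1}\ell!$ terms produced by Lemma~\ref{coef-Xd} in Sections~\ref{gall-net} and~\ref{ret-vis-net} of the present paper). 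As it stands the proposal neither reproduces the paper's (cited) argument nor supplies a complete alternative one; to salvage the component-graph route you would need to state and prove the shortcut-avoidance count for one-component pieces and verify the resulting sums against small cases such as $N_{3,2}=0$ and $N_{4,2}=48$, which the claimed formula does satisfy.
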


This result solved an open problem from \cite{CaZh} where exact counting results for tree-child networks were obtained. However, the above result was derived via generating function techniques, whereas the results from \cite{CaZh} were obtained with the {\it component graph method}. This method as well as generating function techniques will also play an important role in the current paper.

The main purpose of this paper is to prove exact and asymptotic counting results for the classes of galled networks and reticulation-visible networks. We will start with definitions.

\begin{Def}[Phylogenetic Networks]
A (rooted) {\it phylogenetic network} of size $\ell$ is a rooted simple directed acyclic graph (rooted DAG) whose vertices belong to the following three categories:
\begin{itemize}
\item[(i)] A (unique) {\it root} which has indegree $0$ and outdegree $1$;
\item[(ii)] {\it Leaves} which have indegree $1$ and outdegree $0$ and which are bijectively labeled with elements from the set $\{1,\ldots,\ell\}$;
\item[(iii)] {\it Internal vertices} which have indegree and outdegree at least $1$ but not both equal to $1$.
\end{itemize}
\end{Def}

Internal vertices with indegree at least $2$ are called {\it reticulation vertices}; all other internal vertices are called {\it tree vertices}. Moreover, a phylogenetic network is called {\it binary} if all internal vertices have total degree equal to $3$. If not stated otherwise, networks will subsequently always be binary.

Many subclasses of the class of phylogenetic networks have been proposed; see the recent survey \cite{KoPoKuWi} for most of them. The ones of relevance for this paper are defined next.

To be able to state the definitions, we need two notions: first, a {\it tree cycle} is a pair of edge disjoint paths from a common tree vertex to a common reticulation vertex with all remaining vertices being tree vertices; second, a vertex in a phylogenetic network is called {\it visible}, if there exists a leaf such that any path from the root to the leaf must contain the vertex.

\begin{Def}\label{def-net-classes}
A phylogenetic network is called:
\begin{itemize}
\item[(i)] {\it Tree-child network} if every non-leaf vertex has at least one child which is not a reticulation vertex;
\item[(ii)] {\it Normal} if it is tree-child and has no {\it shortcuts}, i.e., the two parents of a reticulation vertex are not in an ancestor-descendant relationship.
\item[(iii)] {\it Galled} if each reticulation vertex is in a (necessarily unique) tree cycle;
\item[(iv)] {\it Reticulation-visible} if each reticulation vertex is visible.
\end{itemize}
\end{Def}

The set-inclusion relationship of these classes is depicted in Figure~\ref{network-classes}, where the class at the bottom is the class of {\it phylogenetic trees} which are networks without reticulation vertices.

\begin{figure}[!t]
\begin{center}
\begin{tikzpicture}[every node/.style={rectangle,rounded
corners=0.15cm,thick,draw=black!50,top color=white, bottom
color=black!20,text centered,scale=0.75},line width=0.15mm,scale=0.7]
\draw (0cm,0cm) node[text width=2.2cm] (1) {Phylogenetic networks};
\draw (0cm,-2.2cm) node[text width=2.5cm] (2) {Reticulation-visible networks};
\draw (2.4cm,-4.5cm) node[text width=1.5cm] (3) {Galled networks};
\draw (-2.5cm,-4.5cm) node[text width=2cm] (4) {Tree-child networks};
\draw (-2.5cm,-6.7cm) node[text width=1.5cm] (5) {Normal networks};
\draw (0cm,-8.8cm) node[text width=2.2cm] (6) {Phylogenetic trees};

\draw (1)--(2); \draw (2)--(3); \draw (3)--(6);
\draw (2)--(4); \draw (4)--(5); \draw (5)--(6);
\end{tikzpicture}
\end{center}
\caption{Hasse diagram of the network classes considered in this paper.}\label{network-classes}
\end{figure}
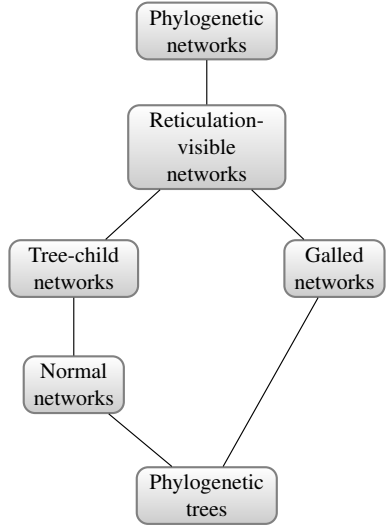

We are interested in enumeration results for these classes. Throughout the paper, we will use the following notation for the number of networks from a class with $\ell$ leaves and $k$ reticulation vertices. (The notation for the number of normal networks already appeared in the above theorem.)

\begin{center}
\begin{tabular}{c|c}
network class & number of networks \\
\hline
normal networks & $N_{\ell,k}$ \\
tree-child networks & ${\rm TC}_{\ell,k}$ \\
galled networks & ${\rm GN}_{\ell,k}$ \\
reticulation-visible networks & ${\rm RV}_{\ell,k}$ \\
phylogenetic networks & ${\rm PN}_{\ell,k}$
\end{tabular}
\end{center}

The following asymptotic result for these numbers is known. For fixed $k$,
\[
{\rm PN}_{\ell,k}\sim{\rm TC}_{\ell,k}\sim{\rm N}_{\ell,k}\sim\frac{2^{k-1}\sqrt{2}}{k!}\left(\frac{2}{e}\right)^{\ell}\ell^{\ell+2k-1},\qquad (\ell\rightarrow\infty).
\]
The first asymptotic equivalence was proved in \cite{Ma} and the second and third in \cite{FuGiMa1,FuGiMa2} (without the closed-form expression for the leading constant) and in \cite{FuHuYu} (with the closed-form expression for the leading constant). An easy consequence of these results is the following.

\begin{cor}
For fixed $k$,
\[
{\rm RV}_{\ell,k}\sim\frac{2^{k-1}\sqrt{2}}{k!}\left(\frac{2}{e}\right)^{\ell}\ell^{\ell+2k-1},\qquad (\ell\rightarrow\infty).
\]
\end{cor}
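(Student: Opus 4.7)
The proof is essentially a sandwich argument that leverages the set-inclusion relations from Figure~\ref{network-classes} together with the three asymptotic equivalences recalled just above the corollary. The plan is to establish the trivial inequalities
\[
{\rm TC}_{\ell,k}\le{\rm RV}_{\ell,k}\le{\rm PN}_{\ell,k}
\]
for all $\ell$ and $k$, and then squeeze.

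First, I would justify these inequalities by quoting Definition~\ref{def-net-classes} and the Hasse diagram: every tree-child network is reticulation-visible, since in a tree-child network each non-leaf vertex has a non-reticulation child, and following these children from any reticulation vertex $v$ deterministically reaches a leaf $\ell_v$ whose every root-to-leaf path must pass through $v$ (so $v$ is visible); and every reticulation-visible network is, tautologically, a phylogenetic network. Since all three classes are counted on the same underlying labeled objects, the pointwise inequalities on $\ell$, $k$ follow immediately.

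Second, I would divide through by the common asymptotic $A_{\ell,k}:=\frac{2^{k-1}\sqrt{2}}{k!}\bigl(\frac{2}{e}\bigr)^{\ell}\ell^{\ell+2k-1}$ and use the known relations ${\rm PN}_{\ell,k}\sim A_{\ell,k}$ and ${\rm TC}_{\ell,k}\sim A_{\ell,k}$ (from \cite{Ma} and \cite{FuHuYu} respectively) to conclude
\[
1=\lim_{\ell\to\infty}\frac{{\rm TC}_{\ell,k}}{A_{\ell,k}}\le\liminf_{\ell\to\infty}\frac{{\rm RV}_{\ell,k}}{A_{\ell,k}}\le\limsup_{\ell\to\infty}\frac{{\rm RV}_{\ell,k}}{A_{\ell,k}}\le\lim_{\ell\to\infty}\frac{{\rm PN}_{\ell,k}}{A_{\ell,k}}=1.
\]
This forces $\lim_{\ell\to\infty}{\rm RV}_{\ell,k}/A_{\ell,k}=1$, which is the claim.

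There is essentially no obstacle here: the work has all been done in the cited papers, and the corollary is really just an observation that the upper and lower bounds in the class hierarchy agree to leading order. The only point worth double-checking is the inclusion ${\rm TC}\subseteq{\rm RV}$, which, although standard, deserves the one-line argument above since it is the whole basis for the squeeze.
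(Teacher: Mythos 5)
Your sandwich argument is exactly the intended proof: the paper states the corollary as an ``easy consequence'' of the displayed asymptotics for ${\rm PN}_{\ell,k}$ and ${\rm TC}_{\ell,k}$ together with the inclusions ${\rm TC}\subseteq{\rm RV}\subseteq{\rm PN}$ from Figure~\ref{network-classes}, and you have correctly supplied the one nontrivial detail (the visibility of every vertex in a tree-child network via the chain of tree-children down to a leaf). Nothing is missing.
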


Thus, for fixed $k$, the asymptotics of the number of networks for all classes from Definition~\ref{def-net-classes} is known {\it except} for the class of galled networks. Here, somehow surprisingly, again the same result holds.

\begin{thm}\label{asymp-GN}
For fixed $k$,
\[
{\rm GN}_{\ell,k}\sim\frac{2^{k-1}\sqrt{2}}{k!}\left(\frac{2}{e}\right)^{\ell}\ell^{\ell+2k-1},\qquad (\ell\rightarrow\infty).
\]
\end{thm}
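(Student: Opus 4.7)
The plan is to sandwich ${\rm GN}_{\ell,k}$ between matching upper and lower bounds. The upper bound is immediate from the Hasse diagram in Figure~\ref{network-classes}: every galled network is reticulation-visible, so ${\rm GN}_{\ell,k}\le{\rm RV}_{\ell,k}$, and the Corollary preceding the theorem already gives ${\rm RV}_{\ell,k}\sim\frac{2^{k-1}\sqrt{2}}{k!}(2/e)^\ell\ell^{\ell+2k-1}$. The substantive content is therefore the matching lower bound.

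For the lower bound I would follow the component graph method of L.~Zhang and his coauthors that is the organizing theme of the paper. The key structural observation is that the tree-cycle requirement forces both parents of each reticulation to be tree vertices lying in a single tree component; it also rules out reticulations whose parent is itself a reticulation, substantially simplifying the decomposition. Consequently, a galled network with $k$ reticulations decomposes into a backbone of tree components bridged by the $k$ reticulation child-edges, together with, for each reticulation, a ``gall'' anchored inside some tree component. Parametrizing by (i) the skeleton of tree components, (ii) the distribution of leaves and internal tree vertices among them, (iii) the rooted-tree structure of each component (counted by the usual double-factorial formula), and (iv) the placement of each gall, one obtains a generating-function or closed-form expression for ${\rm GN}_{\ell,k}$.

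I would then apply analytic combinatorics---specifically singularity analysis applied to the generating function of binary rooted trees, which has a square-root singularity---to extract the leading asymptotic. The dominant contribution is expected to come from configurations in which each of the $k$ galls is ``simple'' (just a reticulation together with two tree-vertex parents), so that each gall essentially contributes a factor $\Theta(\ell^2)$ for the placement of its two parent subdivision points within a large tree component. Summed over the $k$ essentially independent placements, this produces an $\ell^{2k}$ enhancement of the tree asymptotic $\frac{\sqrt{2}}{2}(2/e)^\ell\ell^{\ell-1}$, yielding the claimed $\ell^{\ell+2k-1}$ growth rate, with internal symmetries and the unordering of the $k$ indistinguishable galls producing the constant $\frac{2^{k-1}\sqrt{2}}{k!}$.

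The main obstacle is controlling the contributions from configurations that are \emph{not} simple: galls with longer tree cycles, multiple galls anchored in the same tree component, and skeletons with atypical distributions of leaves among the components. Each of these families must be shown to contribute only lower-order terms, which is where the analytic combinatorics becomes delicate. A secondary difficulty is isolating the exact leading constant from the generating-function analysis: although the Corollary guarantees that the constant cannot exceed $\frac{2^{k-1}\sqrt{2}}{k!}$, the lower-bound calculation must recover this value independently in order to rigorously conclude the asymptotic equivalence.
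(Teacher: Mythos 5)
Your upper bound ${\rm GN}_{\ell,k}\le{\rm RV}_{\ell,k}$ is correct and is a genuine departure from the paper, which never invokes this inclusion: the paper instead extracts the full asymptotics of $E_k(z)=\sum_\ell{\rm GN}_{\ell,k}z^\ell/\ell!$ directly from the functional equation \eqref{Gzv}, showing by induction that the term $j=k$, $\ell_1=\cdots=\ell_k=0$ dominates the recurrence \eqref{exp-Ekz}, whence $E_k(z)\sim F_k(z)/k!$ near $z=1/2$. Your sandwich would in principle let you skip that maximization over configurations, since for the lower bound it suffices to exhibit a single subfamily attaining the constant; for the rest, your plan (component graph method, one large tree component carrying the galls, singularity analysis) is the plan of Section~\ref{gall-net}.

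The gap is in the identification of that dominant subfamily, and it sits exactly where the constant is decided. Your ``simple gall'' picture --- a reticulation whose two tree-vertex parents subdivide edges of a large tree component, contributing $\Theta(\ell^2)$ each --- most naturally describes the \emph{one-component} galled networks, in which every reticulation is followed by a leaf. Their number is ${\rm OGN}_{\ell,k}=\binom{\ell}{k}M_{\ell,k}$, and from $M_{\ell,k}\sim 2^k\ell^{2k}\,(2(\ell-k)-3)!!$ (Lemma~\ref{form-Mkl}) together with Stirling one gets ${\rm OGN}_{\ell,k}\sim\frac{\sqrt{2}}{2\,k!}\left(\frac{2}{e}\right)^{\ell}\ell^{\ell+2k-1}$, which falls short of the claimed constant $\frac{2^{k-1}\sqrt{2}}{k!}$ by a factor of $2^k$ (already visible at $k=1$, where ${\rm OGN}_{\ell,1}=\ell(\ell-1)(2\ell-3)!!$ is asymptotically half of ${\rm GN}_{\ell,1}\sim\ell(2\ell-1)!!$). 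The correct dominant family attaches an \emph{arbitrary pendant phylogenetic tree}, not a single leaf, below each of the $k$ reticulations of the top component; in generating-function terms this replaces a factor $z$ per reticulation (value $1/2$ at the singularity) by $E_0(z)=1-\sqrt{1-2z}$ (value $1$), which is precisely the source of the missing $2^k$ and is what the paper's asymptotic relation $E_k(z)\sim F_k(z)E_0(z)^k/k!$ encodes. So, as sketched, your lower bound would not meet your upper bound. To close the argument you must enlarge the subfamily in this way and then still perform the quantitative step you defer: the singular expansion \eqref{asym-Fkz} of $F_k(z)$, which rests on the recurrence \eqref{rec-Mlk} for the one-component counts, followed by the transfer theorem.
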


We will prove this result below. Our method will also allow us to obtain closed-form expressions for ${\rm GN}_{\ell,k}$ for small $k$. We state the results for $k=2$ and $k=3$.

\begin{thm}\label{exact-GN}
We have,
\[
{\rm GN}_{\ell,2}=\frac{6\ell^4+31\ell^3+30\ell^2-7\ell-9}{3}(2\ell-3)!!-2^{\ell-2}(7\ell+10)(\ell+1)!
\]
and
\begin{align*}
{\rm GN}_{\ell,3}= &\frac{140\ell^6+3184\ell^5+17195\ell^4+34125\ell^3+19475\ell^2-8599\ell-6090}{105}(2\ell-3)!! \\
&\qquad\qquad-2^{\ell-5}\frac{225\ell^3+2045\ell^2+5878\ell+5448}{3}(\ell+1)!.
\end{align*}
\end{thm}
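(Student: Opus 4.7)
The plan is to combine the component graph method with exponential generating function (EGF) techniques, paralleling the derivations of exact formulas for normal and tree-child networks in \cite{CaZh,FuGiMa2}. The EGF of rooted leaf-labeled binary phylogenetic trees is $T(z)=1-\sqrt{1-2z}$, whose $\ell$-th coefficient is $(2\ell-3)!!$, and the shape of the two formulas strongly suggests that the EGFs of ${\rm GN}_{\ell,2}$ and ${\rm GN}_{\ell,3}$ live in the ring $\mathbb{Q}[z,(1-2z)^{-1/2}]$.

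First, I would use the component graph method to classify galled networks with $k$ reticulations by the combinatorial type of their tree-cycle arrangement. In a galled network each reticulation lies in a tree cycle consisting of two internally tree-vertex paths from a common tree vertex down to the reticulation, so a ``component graph type'' for fixed $k$ records which of the $k$ tree cycles are disjoint, which share their top vertex, and which are nested, i.e., one cycle's top vertex lies on a path or in a dangling subtree of another. This yields a short list of types for $k=2$ and a longer but still finite list for $k=3$.

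Second, for each type I would assemble its contribution to the EGF. A tree-cycle path of variable length, each of whose intermediate tree vertices carries one hanging phylogenetic tree, contributes the geometric series $\sum_{m\ge 0}T(z)^m=1/(1-T(z))=(1-2z)^{-1/2}$; a fixed dangling subtree contributes a factor $T(z)$; and an entire tree cycle therefore contributes roughly $(1-2z)^{-1}$ from its two paths, modified by a $1/2$ symmetry factor when the two paths of the cycle are indistinguishable. Summing over all types gives a closed-form EGF lying in $\mathbb{Q}[z,(1-2z)^{-1/2}]$.

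Finally, I would extract coefficients. Using $[z^{\ell}/\ell!](1-2z)^{-1/2}=(2\ell-1)!!$ and $[z^{\ell}/\ell!](1-2z)^{-1}=2^{\ell}\ell!$, together with differentiation identities to access $(1-2z)^{-j/2}$ for larger $j$, the half-integer-power part of the EGF produces the $(2\ell-3)!!$ term of the theorem (with polynomial coefficient of degree $2k$), while the integer-power part produces the $2^{\ell}(\ell+1)!$ term (with polynomial coefficient of lower degree). The main obstacle is the combinatorial case analysis for $k=3$: enumerating every component graph type without overcounting automorphisms and correctly distributing labeled leaves among the hanging subtrees. Once the EGF is assembled the coefficient extraction is mechanical, and the resulting polynomials in $\ell$ should match those stated in Theorem~\ref{exact-GN}.
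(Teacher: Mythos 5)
Your overall framework (component graphs plus EGFs lying in $\mathbb{Q}[z,(1-2z)^{-1/2}]$, with coefficient extraction turning half-integer powers into $(2\ell-3)!!$-terms and integer powers into $2^{\ell}\ell!$-terms, cf.\ Lemma~\ref{coef-Xd}) is the right one, and the final step of your plan matches the paper. The genuine gap is in your structural model of how the $k$ tree cycles sit inside a galled network. You treat each tree cycle as an isolated gadget --- two paths whose intermediate tree vertices each carry one hanging phylogenetic tree, contributing $\bigl(\sum_{m\ge0}T(z)^m\bigr)^2=(1-2z)^{-1}$ up to a symmetry factor --- and then classify arrangements as disjoint, top-sharing, or nested. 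This is false for galled networks: within a single tree component the paths of distinct tree cycles can share tree vertices, and an intermediate vertex of one cycle's path can have as its second child the reticulation vertex of \emph{another} cycle rather than a hanging tree (see the root component of the network in Figure~\ref{fig:example_GN}, which contains a tree vertex whose two children are both reticulation vertices and which lies on paths of two different cycles). Your list of types therefore misses configurations already for two reticulations inside one component, and the per-cycle factor $(1-2z)^{-1}$ undercounts; an EGF assembled this way would not reproduce $M_{\ell,2}=(2\ell-1)(\ell-1)^2(2\ell-5)!!$ for one-component galled networks, and hence not ${\rm GN}_{\ell,2}$ either.

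The paper sidesteps exactly this difficulty by not re-deriving the intra-component combinatorics: it imports the recurrence (\ref{rec-Mlk}) of Gunawan--Rathin--Zhang for the numbers $M_{\ell,k}$ of one-component galled networks, packages them into $F_k(z)=\sum_{\ell\ge0}M_{\ell+k,k}z^{\ell}/\ell!$, and uses the fact that the component graph of a galled network is a tree to obtain the functional equation $G(z,v)=\sum_{j\ge0}F_j(z)(vG(z,v))^j/j!$, from which $E_2=F_1E_1+\tfrac12F_2E_0^2$ and the analogous expression for $E_3$ follow mechanically. If you want to keep your direct ``shape'' analysis, you would at minimum need to re-prove (or cite) the structure theory of one-component galled networks that underlies (\ref{rec-Mlk}); the case analysis you sketch does not yet do this, and it is precisely the step where such skeleton-based arguments have historically gone wrong.
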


\begin{Rem}
The formula for $k=2$ already appeared in \cite{CaZh} (with typos which we have corrected here).
\end{Rem}

Moreover, we will also give similar formulas for ${\rm RV}_{\ell,k}$ for small $k$.

\begin{thm}\label{exact-RV}
We have,
\[
{\rm RV}_{\ell,2}=\frac{6\ell^4+7\ell^3+6\ell^2-\ell-3}{3}(2\ell-3)!!-2^{\ell-1}(2\ell^2+2\ell+1)\ell!
\]
and
\begin{align*}
{\rm RV}_{\ell,3}=&\frac{4\ell^6+20\ell^5+33\ell^4-32\ell^3-76\ell^2+12\ell+12}{3}(2\ell-3)!!\\
&\qquad-2^{\ell-4}\frac{48\ell^4+175\ell^3+99\ell^2-262\ell-264}{3}\ell!.
\end{align*}
\end{thm}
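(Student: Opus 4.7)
The plan is to follow the \emph{component graph method} of Cardona--Zhang that was used in the authors' earlier papers on tree-child and normal networks. A binary network with $k$ reticulation vertices decomposes, upon deletion of the in-edges of every reticulation, into $k+1$ rooted binary tree components joined by $2k$ reticulation edges; the resulting rooted DAG on the components is the \emph{component graph}. A reticulation-visible network is then determined by (a) the shape of this component graph, (b) the type of each component as a rooted binary phylogenetic tree whose leaves are both the network-leaves and the targets of the outgoing reticulation edges, and (c) the distribution of the $\ell$ labeled network-leaves among the components (together with the symmetrisation that accounts for the $k!$ labelings of the reticulations and for which of the two in-edges of each reticulation is ``left'').

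For $k=2$ (respectively $k=3$) I would first enumerate the finitely many admissible shapes of the component graph. Acyclicity and the in-degree-$2$ constraint at reticulations restrict this to a small list, which for $k=2$ can be worked out by hand. For a fixed shape $S$ and a fixed leaf distribution $(\ell_0,\dots,\ell_k)$ with $\ell_0+\cdots+\ell_k=\ell$, letting $r_i$ denote the number of reticulation edges emanating from component $i$, the number of realizations is
\[
\binom{\ell}{\ell_0,\dots,\ell_k}\prod_{i=0}^{k}(2(\ell_i+r_i)-3)!!,
\]
up to a global multiplicative constant depending only on $S$. The visibility condition, which was automatic in the tree-child setting, must now be checked shape by shape, and it typically forces a subset of the $\ell_i$ to be positive; this yields an inclusion--exclusion over shapes in which one or more components are leafless.

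To evaluate the resulting multisums I would invoke standard convolution identities for double factorials, in particular variants of
\[
\sum_{m=0}^{\ell}\binom{\ell}{m}(2m-1)!!\,(2(\ell-m)-1)!!=c\cdot 2^\ell\ell!,
\]
combined with the fact that $(2(\ell-c)-3)!!$ equals $(2\ell-3)!!$ times a rational function of $\ell$ for any fixed $c$. Summing the contributions over all shapes and collecting coefficients should split the answer naturally into a $(2\ell-3)!!$-term and a $2^\ell\ell!$-term, mirroring the form of Theorem~\ref{exact-RV}, and the explicit polynomial coefficients fall out from the shape enumeration. The main obstacle will be the shape-by-shape visibility analysis in the $k=3$ case and the associated inclusion--exclusion: because visibility is a global condition on the network, translating it into a condition on the component graph requires a separate argument for each shape, and the resulting correction terms are exactly what produces the $2^\ell\ell!$ summand; verifying that the final polynomial algebra collapses to the precise coefficients stated in the theorem will most likely require computer-algebra assistance for $k=3$.
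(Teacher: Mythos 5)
Your overall strategy---the component-graph method, a finite list of DAG shapes for $k=2,3$, a multinomial sum over leaf distributions, and convolution identities for double factorials that split the answer into a $(2\ell-3)!!$-part and a $2^{\ell}\ell!$-part---is indeed the route the paper takes. However, there is a genuine gap in your central counting step. You count each tree component with $\ell_i$ network-leaves and $r_i$ outgoing reticulation edges as a plain rooted binary tree on $\ell_i+r_i$ leaves, contributing $(2(\ell_i+r_i)-3)!!$. This is incorrect: the outgoing reticulation edges do not emanate from designated leaf positions of the component but from degree-two vertices subdividing arbitrary edges of it (in pairs forming tree cycles when both parents of a reticulation lie in the same component). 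The correct per-component count is the number $M_{\ell,k}$ of \emph{one-component galled networks}, which satisfies the nontrivial recurrence~(\ref{rec-Mlk}) and equals $p_k(\ell)(2(\ell-k)-3)!!$ for a polynomial $p_k$ of degree $2k$ (Lemma~\ref{form-Mkl}); these polynomial factors record exactly the attachment choices your formula omits. Already for $k=1$ the discrepancy is visible: the root component of a one-component network with one reticulation contributes $M_{\ell,1}=(\ell-1)(2\ell-3)!!$, whereas your formula (with $r_0=2$ and a symmetry factor $2$) gives $\tfrac12(2\ell-1)(2\ell-3)!!$. Without the $M_{\ell,k}$ the polynomial coefficients in the theorem cannot come out right.

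A second, lesser issue is your treatment of visibility. You propose to verify it ``shape by shape'' via an inclusion--exclusion over leafless components, and you attribute the $2^{\ell}\ell!$ summand to these corrections. In the paper, visibility is dispatched once and for all by the characterization of Gunawan--Yan--Zhang: $N$ is reticulation-visible if and only if its component graph is a (non-binary) tree-child network with indegrees at most $2$ and no reticulation vertex followed by a single tree vertex. This determines the admissible set $\mathcal{D}_{k+1}$ of shapes directly (three shapes for $k=2$, thirteen for $k=3$), and the only residual constraint is the condition $\ell_0\in\{0,1\}$ in Proposition~\ref{exp-RV}. The $2^{\ell}\ell!$ term then arises analytically, from the even powers of $\sqrt{1-2z}$ (ultimately from the factors $F_0(z)=1-\sqrt{1-2z}$) when coefficients are extracted via Lemma~\ref{coef-Xd}, not from a combinatorial inclusion--exclusion over shapes. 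To repair your argument you would need to replace the double-factorial component counts by the $M_{\ell,k}$ (or their generating functions $F_k(z)$) and invoke the component-graph characterization of reticulation-visibility.
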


\begin{Rem}
Note that for $k=0$, all the numbers coincide, i.e.,
\[
{\rm PN}_{\ell,0}={\rm RV}_{\ell,0}={\rm GN}_{\ell,0}={\rm TC}_{\ell,0}={\rm N}_{\ell,0}=(2\ell-3)!!,
\]
where the last number is the (well-known) number of phylogenetic trees with $\ell$ leaves. Also, for $k=1$, all the numbers, except the number of normal network, coincide:
\[
{\rm PN}_{\ell,1}={\rm RV}_{\ell,1}={\rm GN}_{\ell,1}={\rm TC}_{\ell,1}=\ell(2\ell-1)!!-2^{\ell-1}\ell!,
\]
where the last result was derived, e.g., in \cite{CaZh}. ($N_{\ell,1}$ is known as well; see, e.g., \cite{Zh}.)
\end{Rem}

Finally, we will consider maximal reticulated networks. Here, the following sharp upper bounds for the number of reticulation vertices $k$ have been proved.

\begin{center}
\begin{tabular}{c|c}
network class & maximal number of reticulation vertices \\
\hline
normal networks & $\ell-2$ \\
tree-child networks & $\ell-1$ \\
galled networks & $2\ell-2$ \\
reticulation-visible networks & $3\ell-3$
\end{tabular}
\end{center}

The result for normal networks first appeared in \cite{Wi}; the result for tree-child networks was discovered by many authors (and is, in fact, easy to prove). For galled networks, the optimal upper bound was proved, e.g., in \cite{GuGuZh,GuZh}; see also \cite{FuYuZh2}. Finally, for reticulation-visible networks, in \cite{GaGuLaViZh}, the upper bound $4\ell-4$ was established and then this upper bound was reduced to the optimal one independently in \cite{BoSe} and \cite{GuZh}. Here, we will give a simplified proof which in addition also gives the number of maximal reticulated reticulation-visible networks.

\begin{thm}\label{max-ret-RV}
The maximum number of reticulation vertices in a reticulation-visible networks with $\ell$ leaves equals $3\ell-3$. Moreover,
\[
{\rm RV}_{\ell,3\ell-3}={\rm TC}_{\ell,\ell-1}=\Theta\left(\ell^{-2/3}e^{a_1(3\ell)^{1/3}}\left(\frac{12}{e^2}\right)^{\ell}\ell^{2\ell}\right),
\]
where $a_1$ is the largest root of the Airy function of the first kind.
\end{thm}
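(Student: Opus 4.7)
The plan is to address the three assertions of the theorem in turn: the sharp upper bound $k \leq 3\ell - 3$, the identity ${\rm RV}_{\ell,3\ell-3} = {\rm TC}_{\ell,\ell-1}$, and the asymptotic estimate.

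For the upper bound, I would use the component-graph method. Decompose a given RV network $N$ into its tree components $C_1,\ldots,C_m$, where each component is a maximal connected subgraph of tree edges. Each $C_i$ is a rooted binary tree whose root is either the network root or a reticulation vertex and whose leaves are either network leaves or tree-vertex parents of reticulations. The visibility hypothesis forces a strong constraint: each reticulation must have a network leaf in its subtree whose every root-path passes through it. By carefully accounting for these visibility witnesses across the component graph (in particular, bounding how many reticulation-parent leaves a single component can carry as a function of its network leaves), one obtains $k \leq 3\ell - 3$. The goal is to extract a streamlined proof compared to those in \cite{BoSe,GuZh} by using the component-graph framework uniformly.

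For the identity ${\rm RV}_{\ell,3\ell-3} = {\rm TC}_{\ell,\ell-1}$, the extremal analysis from the previous step should reveal that a maximally reticulated RV network has a highly rigid local structure: each of the $\ell-1$ non-root components is forced into a cherry-like configuration, and the reticulations organize into blocks of three associated to individual leaves (consistent with $3\ell - 3 = 3(\ell-1)$ reticulations and $4\ell-4 = 2(2\ell-2)$ tree vertices). Formally, I would construct a contraction map that collapses each such block to a single reticulation, producing a tree-child network with $\ell - 1$ reticulations, and exhibit its inverse expansion map that replaces each reticulation in a maximal TC network by the rigid RV-block. Verifying bijectivity then reduces to checking that the two maps are well-defined and mutually inverse, which follows from the structural rigidity of the extremal case.

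For the asymptotic of ${\rm TC}_{\ell,\ell-1}$, the number of maximally reticulated tree-child networks, I would either invoke an existing result or derive one by saddle-point analysis. The Airy factor $\ell^{-2/3} e^{a_1(3\ell)^{1/3}}$ is characteristic of counting problems whose generating function has two coalescing singularities, and the $(12/e^2)^\ell \ell^{2\ell}$ is consistent with a Stirling-type leading order associated with a quadratic combinatorial explosion in $\ell$. The main obstacle I foresee is formalizing the rigidity underlying the bijection in the second step: one must exclude all alternative extremal configurations and show that any deviation from the proposed block structure strictly decreases the reticulation count. Once the bijection is established, the asymptotic analysis is a routine application of analytic-combinatorics techniques provided a suitable generating-function recursion or closed-form for ${\rm TC}_{\ell,\ell-1}$ is available.
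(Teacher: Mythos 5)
You have correctly identified the framework the paper uses (the component graph method), but both of the decisive steps are missing from your proposal, and in each case what you sketch would not obviously close the gap. For the upper bound, ``carefully accounting for visibility witnesses across the component graph'' is not an argument; the paper's actual mechanism is to observe that the component graph $\tilde{C}$ of an RV network is a (not necessarily binary) tree-child network, to define $r(\tilde{C})$ as the maximal number of reticulations obtainable by decompressing $\tilde{C}$ (reticulations of $\tilde{C}$, plus internal vertices of indegree one, plus arrows placeable on pendant edges), to reduce to binary $\tilde{C}$ by a vertex-splitting argument, and then to compute the clean identity $r(\tilde{C})=2\ell+k-2$ from the relation $\ell+k=t+2$. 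The bound $3\ell-3$ then drops out of the \emph{known} tree-child bound $k\le\ell-1$, rather than from any fresh visibility analysis. Your sketch never reduces the problem to the tree-child bound, which is the whole point of the ``streamlined proof'' the theorem advertises.

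For the identity ${\rm RV}_{\ell,3\ell-3}={\rm TC}_{\ell,\ell-1}$, you propose a contraction/expansion bijection built on ``blocks of three reticulations associated to individual leaves'' and you yourself flag the rigidity of this block structure as the main obstacle. That obstacle is real, and your proposed block structure does not match the actual extremal picture: in the paper's extremal networks only $\ell-1$ of the $3\ell-3$ reticulations survive as reticulations of the component graph, while the other $2\ell-2$ are created during decompression (arrows on pendant edges and edges into internal components), so the reticulations do not naturally come in leaf-indexed triples that one contracts. The paper's resolution is much simpler and avoids any ad hoc rigidity argument: equality in Claim 2 forces $\tilde{C}$ to be a maximal reticulated binary tree-child network, every tree vertex of such a network has exactly one reticulation child (Lemma 1 of the cited tree-child paper), hence each component is replaced by a one-component network counted by $M_{2,1}=1$, so decompression is \emph{unique} and the correspondence with ${\rm TC}_{\ell,\ell-1}$ is immediate. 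Your third step (quoting the Airy-type asymptotics for ${\rm TC}_{\ell,\ell-1}$ from the literature) agrees with the paper, which simply cites the known result. As it stands, your proposal is a plausible research plan within the right framework, but the two essential arguments are not supplied.
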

\begin{Rem}
The asymptotic result follows from the main result in \cite{FuYuZh1}.
\end{Rem}

We conclude the introduction by an outline of the paper. The main method of proof will be the component graph method which was introduced by Zhang and his coauthors. We will recall this method and some related results in the next section. In Section~\ref{gall-net}, we will then combine it with generating function techniques and tools from analytic combinatorics to prove Theorem~\ref{asymp-GN} and Theorem~\ref{exact-GN} for galled networks. Section~\ref{ret-vis-net} will contain the proof of Theorem~\ref{exact-RV} for reticulation-visible networks. Theorem~\ref{max-ret-RV} is not really concerned with phylogenetic networks with few reticulation vertices and therefore does not really fall into the scope of this paper. Therefore, we will prove it in the appendix. The paper will be concluded with some final remarks in Section~\ref{con}.

\section{The Component Graph Method}\label{cgm}

The component graph method was introduced by Zhang and used by him and his coauthors to solve algorithmic problems for phylogenetic networks; see \cite{CaZh,GuGuZh,GuRaZh,GuYaZh}. For instance, it was used to algorithmically solve the counting problem for tree-child networks in \cite{CaZh} and galled networks in \cite{GuRaZh}.

The central object of the method are {\it component graphs}. First, for a given phylogenetic network, its {\it tree components} are obtained by removing the two incoming edges of every reticulation vertex. (These edges are called {\it reticulation edges}.) Using these objects, component graphs are defined as follows.

\begin{Def}[Component Graph]
The {\it component graph} of a phylogenetic network $N$ has a vertex for each tree-component. These vertices are then connected via edges according to how the tree-components are connected in $N$ via the reticulation edges. Finally, we attach the leaves (with their labels) in the tree-components to their vertices in the component graph except when the tree-component has only one leaf and corresponds to a terminal vertex in the component graph with incoming edges a double edge, in which case we use the label of the leaf to label the terminal vertex.
\end{Def}

See Figures~\ref{fig:example_RV} for a phylogenetic network and its component graph. Note that we have replaced double edges by single edges but have indicated that they are double edges by placing arrows on them.

\newpage

\begin{figure}[H]
\centering
\tikzset{
    styNode/.style={
        circle, draw, line width=1pt, inner sep = 2pt, font=\scriptsize
    },
    smallNode/.style={
        circle, fill=black, inner sep=0pt, minimum size=3pt
    }
}
\newcommand{\doubleEdge}[2]{
    \draw[line width=1.2pt, decoration={markings,mark=at position 0.7 with {\arrow{latex[length=3mm]}}}, postaction={decorate}] (#1) -- (#2);
}
\newcommand{\singleEdge}[2]{
    \draw[line width=1.2pt] (#1) -- (#2);
}
\newcommand{\bendEdge}[3]{
    \draw[line width=1.2pt] (#2) to [bend right=#1] (#3);
}
\newcommand{\parH}{1.5}
\newcommand{\parh}{0.5*\parH}

\newcommand{\myTheta}{30}
\newcommand{\addLeaf}[5]{
    \node[line width=1pt,inner sep = 1pt, font=\scriptsize] (#2) at ($(#1) + (-90+#4:#5)$) {#3};
    \singleEdge{#1}{#2}
}
\newcommand{\addRetiLeaf}[5]{
    \node[line width=1pt,inner sep = 1pt, font=\scriptsize] (#2) at ($(#1) + (-90+#4:#5)$) {#3};
    \doubleEdge{#1}{#2}
}
\begin{subfigure}[b]{0.33\textwidth}
    \centering
    \begin{tikzpicture}
        \node[smallNode] (a1) at (0,0) {};
        \node[smallNode] (a2) at ($(a1) + (-90:0.5*\parh)$) {};
        \node[smallNode] (a3) at ($(a2) + (-90+\myTheta:1.0*\parh)$) {};
        \node[smallNode] (b1) at ($(a2) + (-90-\myTheta:1.5*\parh)$) {};
        \node[smallNode] (b2) at ($(b1) + (-90:0.5*\parh)$) {};
        \node[smallNode] (b3) at ($(b2) + (-90-\myTheta:1.0*\parh)$) {};
        \node[smallNode] (c1) at ($(a3) + (-90+\myTheta:1.5*\parh)$) {};
        \node[smallNode] (c2) at ($(c1) + (-90:0.5*\parh)$) {};
        \node[smallNode] (c3) at ($(c2) + (-90-\myTheta:1.0*\parh)$) {};
        \node[smallNode] (c4) at ($(c3) + (-90-\myTheta:1.0*\parh)$) {};
        \node[smallNode] (d1) at ($(c4) + (-90-\myTheta:1.0*\parh)$) {};
        \node[smallNode] (d2) at ($(d1) + (-90:0.5*\parh)$) {};
        \node[smallNode] (d3) at ($(d2) + (-90-\myTheta:1.0*\parh)$) {};
        \node[smallNode] (d4) at ($(d2) + (-90+\myTheta:1.0*\parh)$) {};
        \node[smallNode] (e1) at ($(c4) + (-90+\myTheta:1.0*\parh)$) {};
        \node[smallNode] (f1) at ($(d3) + (-90-\myTheta:1.0*\parh)$) {};
        \node[smallNode] (f2) at ($(f1) + (-90:0.5*\parh)$) {};
        \node[smallNode] (f3) at ($(f2) + (-90+\myTheta:1.0*\parh)$) {};
        \node[smallNode] (f4) at ($(f3) + (-90-\myTheta:1.0*\parh)$) {};
        \node[smallNode] (g1) at ($(d4) + (-90+\myTheta:1.0*\parh)$) {};
        \node[smallNode] (g2) at ($(g1) + (-90:0.5*\parh)$) {};
        \node[smallNode] (h1) at ($(f4) + (-90-\myTheta:1.0*\parh)$) {};
        \node[smallNode] (i1) at ($(g2) + (-90+\myTheta:1.0*\parh)$) {};
        \node[smallNode] (i2) at ($(i1) + (-90:0.5*\parh)$) {};

        \foreach \x/\y in {
            a1/a2,a2/a3,
            a3/b1,b1/b2,b2/b3,
            a3/c1,b2/c1,c1/c2,c2/c3,c3/c4,
            b3/d1,c4/d1,d1/d2,d2/d3,d2/d4,
            c4/e1,
            d3/f1,d4/f1,f1/f2,f2/f3,f3/f4,
            d4/g1,g1/g2,
            f4/h1,
            g2/i1,i1/i2}{
            \singleEdge{\x}{\y}
        }
        \draw[white, line width=3pt, shorten >= 5mm, shorten <= 2mm] (d3) -- (g1);
        \singleEdge{d3}{g1}

        \foreach \t/\x/\y in {
            1/a2/b1,
            -1/c3/e1,
            1/f2/h1,
            -1/c2/i1}{
            \bendEdge{\t*\myTheta}{\x}{\y}
        }

        \foreach \x/\n/\t in {
            f3/1/1,
            e1/2/0,
            g2/3/-1,
            i2/4/-1,
            b3/5/-1,
            f4/6/1,
            h1/7/0,
            i2/8/1}{
            \addLeaf{\x}{L\n}{\n}{\t*\myTheta}{0.7*\parh}
        }

        \begin{scope}[on background layer={color=yellow}]
            \foreach \x in {
                a1, a2, a3,
                b1, b2, b3,
                c1, c2, c3, c4,
                d1, d2, d3, d4,
                f1, f2, f3, f4,
                g1, g2,
                i1, i2}{
                \node[circle, fill=white!80!red, minimum size=12pt] at (\x) {};
            }
            \foreach \x/\y in {
                a1/a2,a2/a3,
                b1/b2,b2/b3,
                c1/c2,c2/c3,c3/c4,
                d1/d2,d2/d3,d2/d4,
                f1/f2,f2/f3,f3/f4,
                g1/g2,
                i1/i2}{
                \draw[draw=white!80!red, line width=12pt] (\x.center) -- (\y.center);
            }
        \end{scope}
    \end{tikzpicture}
    \caption*{$N$}
\end{subfigure}
\renewcommand{\myTheta}{30}
\newcommand{\deltaH}{1.6}
\begin{subfigure}[b]{0.33\textwidth}
    \centering
    \begin{tikzpicture}
        \node[styNode] (A) at (  0,  0) {};
        \node[styNode] (B) at ($(A) + (-90-\myTheta:\parH)$) {};
        \node[styNode] (C) at ($(B) + (-90+2*\myTheta:\parH)$) {};
        \node[styNode] (D) at ($(B) + (-90-0*\myTheta:\parH)$) {};
        \node[styNode] (F) at ($(D) + (-90-2*\myTheta:\parH)$) {};
        \node[styNode] (G) at ($(D) + (-90+0*\myTheta:\parH)$) {};
        \node[styNode] (I) at ($(G) + (-90+2*\myTheta:\parH)$) {};

        \foreach \x/\y in {
            A/B,
            D/F,
            D/G}{ 
            \doubleEdge{\x}{\y}
        }

        \foreach \x/\y in {
            A/C, B/C,
            B/D, C/D,
            G/I, C/I}{ 
            \singleEdge{\x}{\y}
        }

        \foreach \x/\n/\t in {
            F/1/0,
            G/3/0,
            I/4/-1,
            B/5/-1,
            F/6/1,
            I/8/1}{
            \addLeaf{\x}{L\n}{\n}{\t*\myTheta}{\parh}
        }
        \foreach \x/\n/\t in {
            F/7/-1,
            C/2/1}{
            \addRetiLeaf{\x}{L\n}{\n}{\t*\myTheta}{\parh}
        }

        \node[] (TOP) at ($(A) + ( 90:\deltaH)$) {};
        \node[] (BTM) at ($(L4) + (-90:\deltaH)$) {};

    \end{tikzpicture}
    \caption*{$C(N)$}
\end{subfigure}
\vspace*{0.15cm}\caption{Left: A reticulation-visible network with $8$ leaves and $8$ reticulation vertices (with the tree-components highlighted). Right: The component graph of the network. Note that it is tree-child network.}
\label{fig:example_RV}
\end{figure}
\vspace*{0.35cm}
\begin{figure}[H]
\centering
\tikzset{
    styNode/.style={
        circle, draw, line width=1pt, inner sep = 2pt, font=\scriptsize
    },
    smallNode/.style={
        circle, fill=black, inner sep=0pt, minimum size=3pt
    }
}
\newcommand{\doubleEdge}[2]{
    \draw[line width=1.2pt, decoration={markings,mark=at position 0.7 with {\arrow{latex[length=3mm]}}}, postaction={decorate}] (#1) -- (#2);
}
\newcommand{\singleEdge}[2]{
    \draw[line width=1.2pt] (#1) -- (#2);
}
\newcommand{\bendEdge}[3]{
    \draw[line width=1.2pt] (#2) to [bend right=#1] (#3);
}
\newcommand{\parH}{1.5}
\newcommand{\parh}{0.5*\parH}

\newcommand{\myTheta}{30}
\newcommand{\addLeaf}[5]{
    \node[line width=1pt,inner sep = 1pt, font=\scriptsize] (#2) at ($(#1) + (-90+#4:#5)$) {#3};
    \singleEdge{#1}{#2}

}
\newcommand{\addRetiLeaf}[5]{
    \node[line width=1pt,inner sep = 1pt, font=\scriptsize] (#2) at ($(#1) + (-90+#4:#5)$) {#3};
    \doubleEdge{#1}{#2}
}
\begin{subfigure}[b]{0.44\textwidth}
    \centering
    \begin{tikzpicture}
        \node[smallNode] (a1) at (0,0) {};
        \node[smallNode] (a2) at ($(a1) + (-90+0*\myTheta:0.5*\parh)$) {};
        \node[smallNode] (a3) at ($(a2) + (-90+1*\myTheta:1.0*\parh)$) {};
        \node[smallNode] (a4) at ($(a3) + (-90+1*\myTheta:1.0*\parh)$) {};
        \node[smallNode] (a5) at ($(a3) + (-90-1*\myTheta:2.0*\parh)$) {};
        \node[smallNode] (a6) at ($(a4) + (-90-1*\myTheta:1.0*\parh)$) {};
        \node[smallNode] (b1) at ($(a5) + (-90-1*\myTheta:1.0*\parh)$) {};
        \node[smallNode] (b2) at ($(b1) + (-90+0*\myTheta:0.5*\parh)$) {};
        \node[smallNode] (b3) at ($(b2) + (-90-1*\myTheta:1.0*\parh)$) {};
        \node[smallNode] (c1) at ($(a5) + (-90+1*\myTheta:1.0*\parh)$) {};
        \node[smallNode] (c2) at ($(c1) + (-90+0*\myTheta:0.5*\parh)$) {};
        \node[smallNode] (d1) at ($(a6) + (-90+1*\myTheta:1.0*\parh)$) {};
        \node[smallNode] (d2) at ($(d1) + (-90+0*\myTheta:0.5*\parh)$) {};
        \node[smallNode] (d3) at ($(d2) + (-90-1*\myTheta:1.0*\parh)$) {};
        \node[smallNode] (d4) at ($(d3) + (-90+1*\myTheta:1.0*\parh)$) {};
        \node[smallNode] (d5) at ($(d2) + (-90+1*\myTheta:2.0*\parh)$) {};
        \node[smallNode] (e1) at ($(b3) + (-90+1*\myTheta:1.0*\parh)$) {};
        \node[smallNode] (e2) at ($(e1) + (-90+0*\myTheta:0.5*\parh)$) {};
        \node[smallNode] (e3) at ($(e2) + (-90+1*\myTheta:1.0*\parh)$) {};
        \node[smallNode] (e4) at ($(e3) + (-90-1*\myTheta:1.0*\parh)$) {};
        \node[smallNode] (f1) at ($(d4) + (-90-1*\myTheta:1.0*\parh)$) {};
        \node[smallNode] (g1) at ($(d4) + (-90+1*\myTheta:1.0*\parh)$) {};
        \node[smallNode] (g2) at ($(g1) + (-90+0*\myTheta:0.5*\parh)$) {};
        \node[smallNode] (g3) at ($(g2) + (-90-1*\myTheta:1.0*\parh)$) {};
        \node[smallNode] (g4) at ($(g2) + (-90+1*\myTheta:1.0*\parh)$) {};
        \node[smallNode] (h1) at ($(e4) + (-90-1*\myTheta:1.0*\parh)$) {};
        \node[smallNode] (i1) at ($(e4) + (-90+1*\myTheta:1.0*\parh)$) {};
        \node[smallNode] (i2) at ($(i1) + (-90+0*\myTheta:0.5*\parh)$) {};
        \node[smallNode] (i3) at ($(i2) + (-90+1*\myTheta:1.0*\parh)$) {};
        \node[smallNode] (j1) at ($(g3) + (-90+1*\myTheta:1.0*\parh)$) {};

        \singleEdge{a1}{a2}
        \singleEdge{a2}{a3}
        \singleEdge{a3}{a4}
        \singleEdge{a3}{a5}
        \singleEdge{a4}{a6}
        \bendEdge{\myTheta}{a2}{b1}
        \singleEdge{a5}{b1}
        \singleEdge{b1}{b2}
        \singleEdge{b2}{b3}
        \singleEdge{a5}{c1}
        \singleEdge{a6}{c1}
        \singleEdge{c1}{c2}
        \bendEdge{-\myTheta}{a4}{d1}
        \singleEdge{a6}{d1}
        \singleEdge{d1}{d2}
        \singleEdge{d2}{d3}
        \singleEdge{d3}{d4}
        \singleEdge{d2}{d5}
        \bendEdge{-\myTheta}{b2}{e1}
        \singleEdge{b3}{e1}
        \singleEdge{e1}{e2}
        \singleEdge{e2}{e3}
        \singleEdge{e3}{e4}
        \bendEdge{\myTheta}{d3}{f1}
        \singleEdge{d4}{f1}
        \singleEdge{d4}{g1}
        \singleEdge{d5}{g1}
        \singleEdge{g1}{g2}
        \singleEdge{g2}{g3}
        \singleEdge{g2}{g4}
        \bendEdge{\myTheta}{e2}{h1}
        \singleEdge{e4}{h1}
        \bendEdge{-\myTheta}{e3}{i1}
        \singleEdge{e4}{i1}
        \singleEdge{i1}{i2}
        \singleEdge{i2}{i3}
        \singleEdge{g3}{j1}
        \singleEdge{g4}{j1}

        \foreach \x/\t/\n in {
            h1/0/1,
            i3/-1/2,
            c2/-1/3,
            g3/-1/4,
            i2/-1/5,
            j1/0/6,
            b3/-1/7,
            c2/0/8,
            d5/1/9,
            f1/0/10,
            g4/1/11,
            i3/1/12}{
            \addLeaf{\x}{L\n}{\n}{\t*\myTheta}{0.7*\parh}
        }

        \begin{scope}[on background layer={color=yellow}]
            \foreach \x in {
                a1, a2, a3, a4, a5, a6,
                b1, b2, b3,
                c1, c2,
                d1, d2, d3, d4, d5,
                e1, e2, e3, e4,
                g1, g2, g3, g4,
                i1, i2, i3}{ 
                \node[circle, fill=white!80!red, minimum size=12pt] at (\x) {};
            }
            \foreach \x/\y in {
                a1/a2,a2/a3,a3/a4,a3/a5,a4/a6,
                b1/b2,b2/b3,
                c1/c2,
                d1/d2,d2/d3,d3/d4,d2/d5,
                e1/e2,e2/e3,e3/e4,
                g1/g2,g2/g3,g2/g4,
                i1/i2,i2/i3}{
                \draw[draw=white!80!red, line width=12pt] (\x.center) -- (\y.center);
            }
        \end{scope}

    \end{tikzpicture}
    \caption*{$N$}
\end{subfigure}
\begin{subfigure}[b]{0.44\textwidth}
    \centering
    \newcommand{\deltaH}{2.4}
    \begin{tikzpicture}
        \node[styNode] (A) at (0,0) {};
        \node[styNode] (B) at ($(A) + (-90-1.2*\myTheta:1.0*\parH)$) {};
        \node[styNode] (C) at ($(A) + (-90         :1.0*\parH)$) {};
        \node[styNode] (D) at ($(A) + (-90+1.2*\myTheta:1.0*\parH)$) {};
        \node[styNode] (E) at ($(B) + (-90         :1.0*\parH)$) {};
        \node[styNode] (G) at ($(D) + (-90+\myTheta:1.0*\parH)$) {};
        \node[styNode] (I) at ($(E) + (-90+\myTheta:1.0*\parH)$) {};

        \node[] (TOp) at ($(A) + ( 90:\deltaH)$) {};
        \node[] (BTM) at ($(I) + (-90:\deltaH)$) {};

        \doubleEdge{A}{B}
        \doubleEdge{A}{C}
        \doubleEdge{A}{D}
        \doubleEdge{B}{E}
        \doubleEdge{D}{G}
        \doubleEdge{E}{I}

        \addLeaf{I}{L2}{2}{-1*\myTheta}{\parh}
        \addLeaf{C}{L3}{3}{-1*\myTheta}{\parh}
        \addLeaf{G}{L4}{4}{-1*\myTheta}{\parh}
        \addLeaf{I}{L5}{5}{ 0*\myTheta}{\parh}
        \addLeaf{B}{L7}{7}{-1*\myTheta}{\parh}
        \addLeaf{C}{L8}{8}{ 1*\myTheta}{\parh}
        \addLeaf{D}{L9}{9}{-1*\myTheta}{\parh}
        \addLeaf{G}{L11}{11}{ 1*\myTheta}{\parh}
        \addLeaf{I}{L12}{12}{ 1*\myTheta}{\parh}

        \addRetiLeaf{D}{L10}{10}{ 0*\myTheta}{\parh}
        \addRetiLeaf{G}{L6}{6}{ 0*\myTheta}{\parh}
        \addRetiLeaf{E}{L1}{1}{-1*\myTheta}{\parh}
    \end{tikzpicture}
    \caption*{$C(N)$}
\end{subfigure}
\vspace*{0.15cm}\caption{Left: A galled network with $12$ leaves and $9$ reticulation vertices (with the tree-components highlighted). Right: The component graph of the network. Note that it is a phylogenetic tree.}
\label{fig:example_GN}
\end{figure}

\begin{Rem}
Note that different definitions of the component graph are used for different network classes. For example, for tree-child networks, the leaves of the tree-components are not attached to the nodes of the component graph but the nodes themselves are labeled; see, e.g., \cite{CaZh,ChFuLiWaYu2,FuHuYu}. Our above definition is most suitable for galled networks and reticulation-visible networks which are the network classes considered in this paper.
\end{Rem}

\vspace*{0.25cm}If $N$ is a network, we denote its component graph by $C(N)$. The component graph can be seen as a compression of the network. We also use the notation $\tilde{C}(N)$ to denote the component graph with the arrows on single edges removed.

Note that the phylogenetic network in Figure~\ref{fig:example_RV} is a reticulation-visible network and its component graph is a (non-binary) tree-child network. This, in fact, is a general phenomenon.

\begin{thm*}[\cite{GuYaZh}]
Let $N$ be a phylogenetic network.
\begin{itemize}
\item[(i)] $N$ is galled if and only if $\tilde{C}(N)$ is a (not necessarily binary) phylogenetic tree.
\item[(ii)] $N$ is reticulation-visible if and only if $\tilde{C}(N)$ is a (not necessarily binary) tree-child network with all vertices of indegree at most $2$ and no reticulation vertex has just one child that is, moreover, a tree vertex.
\end{itemize}
\end{thm*}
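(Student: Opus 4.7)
The plan is to handle both parts by translating global properties of the network $N$ into local combinatorial conditions on vertices of $\tilde{C}(N)$. The basic bridge common to both parts is the following: each non-root vertex $[T]$ of $\tilde{C}(N)$ has in-degree equal to the number of distinct tree components containing the two parents of the reticulation vertex $r_T$ that roots $T$. This number is always $1$ or $2$, and equals $1$ iff both parents of $r_T$ lie in the same tree component (the ``double edge'' case in $C(N)$). In particular, the in-degree-at-most-$2$ condition in part (ii) holds automatically for any binary network.

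For part (i), the plan is to establish the bridge lemma: \emph{a reticulation vertex $r$ of $N$ lies in a tree cycle iff both parents of $r$ lie in the same tree component of $N$.} The forward direction is immediate because the two paths of a tree cycle have only tree vertices internally, so they use no reticulation edges and hence stay inside a single tree component, forcing $p_1$ and $p_2$ into that component. For the backward direction, when $p_1,p_2 \in T$ take $v$ to be their least common ancestor in the tree $T$; since $v$ has out-degree at least $2$ in $T$ and binary reticulations have out-degree $1$, $v$ must be a tree vertex, and the two $T$-paths from $v$ to $p_1$ and to $p_2$, each extended by the reticulation edge $p_i\to r$, form a tree cycle at $r$. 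Combined with the in-degree bridge, $N$ is galled iff every vertex of $\tilde{C}(N)$ has in-degree at most $1$, which (since $\tilde{C}(N)$ is connected and acyclic with a unique source, the root tree component) is equivalent to $\tilde{C}(N)$ being a rooted tree.

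For part (ii), the plan is to characterize visibility of each reticulation vertex $r=r_T$ via the descent from $[T]$ in $\tilde{C}(N)$. Call an edge $[T]\to[T']$ of $\tilde{C}(N)$ \emph{dominating} if $[T']$ has in-degree $1$, since then both parents of $r_{T'}$ lie in $T$, forcing every root-to-$r_{T'}$ path through $r_T$. One then shows that $r_T$ is visible in $N$ iff there is a chain of dominating edges starting at $[T]$ that reaches a vertex carrying a leaf. The tree-child property of $\tilde{C}(N)$ supplies a non-reticulation child at every non-leaf vertex, providing such a chain inductively; and the extra clause forbidding a reticulation vertex of $\tilde{C}(N)$ to be \emph{just} followed by a single tree vertex rules out the degenerate configuration in which the descent from a reticulation $[T]$ proceeds into a tree-vertex successor that carries no leaf of its own and whose further descendants lie beyond the dominating reach of $r_T$, so that no leaf is in fact certified as dominated. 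The main obstacle I anticipate is the converse direction of part (ii): recovering visibility of \emph{every} reticulation of $N$ from the three local conditions on $\tilde{C}(N)$, since visibility is a global statement about all root-to-leaf paths. I expect this to be handled by an induction along the tree-child descent of $\tilde{C}(N)$, invoking the third clause exactly at the step where a lone tree-vertex successor of a reticulation of $\tilde{C}(N)$ would otherwise fail to deliver a dominated leaf.
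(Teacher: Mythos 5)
This theorem is not proved in the paper at all: it is quoted verbatim from Gunawan--Yan--Zhang \cite{GuYaZh} and used as a black box, so there is no in-paper argument to measure your proposal against; I can only assess the outline on its own terms. Part (i) is essentially correct and complete. Your bridge lemma (a reticulation vertex lies in a tree cycle iff its two parents lie in the same tree component) is the right statement, both directions of your argument for it are sound (modulo the small degenerate case where the least common ancestor in $T$ is itself one of the two parents, which still yields a valid tree cycle), and the translation ``every non-root vertex of $\tilde{C}(N)$ has indegree $1$'' $\Leftrightarrow$ ``$\tilde{C}(N)$ is a rooted tree'' is immediate.

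Part (ii), however, has a genuine gap on two counts. First, your central claim --- $r_T$ is visible iff some chain of dominating edges from $[T]$ reaches a vertex carrying a leaf --- is only argued in the easy direction (a dominating chain certifies visibility); the converse, that a visible $r_T$ must admit such a chain, is the actual content of the theorem and is nowhere established. Tracing a dominated leaf $x$ back up through the components only tells you that every root-to-$r_{T'}$ path for the intermediate components passes through $r_T$; it does not by itself produce components of indegree $1$ at each step, so the reduction of global visibility to the local tree-child condition is still missing. Second, your explanation of the clause ``no reticulation vertex just followed by one tree vertex'' is wrong. If a reticulation vertex $[T]$ of $\tilde{C}(N)$ has a single child $[T']$ of indegree $1$, then \emph{both} parents of $r_{T'}$ lie in $T$, so the edge $[T]\to[T']$ is dominating and $T'$ and its descendants are squarely \emph{within} the dominating reach of $r_T$ --- the opposite of what you assert. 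The real role of that clause is realizability, not visibility: a component $T$ rooted at a reticulation vertex, with no leaves of $N$ and exactly two outgoing reticulation edges both aimed at the same $r_{T'}$, forces (by the degree count $\lambda+\rho=t+1$ inside a binary tree component) a single tree vertex of $T$ with both children equal to $r_{T'}$, i.e.\ a double edge in $N$, which is excluded since $N$ is simple. Without repairing both points, the equivalence in (ii) is not established.
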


See Figure~\ref{fig:example_GN} for an illustration of part (i) of the above result.

In order to generate networks, we start from the component graphs and decompress them, i.e., we generate all networks whose component graph is the given component graph. For this, the notion of {\it one-component phylogenetic networks} is useful.

\begin{Def}
A phylogenetic network is called a {\it one-component phylogenetic network} if every reticulation vertex is directly followed by a leaf.
\end{Def}

In other words, a network is one-component if it has only one non-trivial tree-component.

One-component galled networks were counted in \cite{GuRaZh}. (Note that, in fact, the class of  one-component networks and the class of one-component galled networks coincide; see \cite{CaZh}.) Denote by ${\rm OGN}_{\ell,k}$ the number of one-component galled networks with $\ell$ leaves and $k$ reticulation vertices and by $M_{\ell,k}$ those one-component galled networks whose leaves below the reticulation vertices are labeled by $\{1,\ldots,k\}$. Then, the following result was proved in \cite{GuRaZh}. (Note that part (i) is trivial.)

\begin{pro*}[\cite{GuRaZh}]
\begin{itemize}
\item[(i)] We have,
\[
{\rm OGN}_{\ell,k}=\binom{\ell}{k}M_{\ell,k}.
\]
\item[(ii)] For $2\leq k\leq\ell$,
\begin{align}
M_{\ell,k}=(\ell+k-2)&M_{\ell,k-1}+(k-1)M_{\ell,k-2}\nonumber\\
&+\frac{1}{2}\sum_{1\leq d\leq k-1}\binom{k-1}{d}(2d-1)!!\left(M_{\ell-d,k-1-d}-M_{\ell+1-d,k-1-d}\right)\label{rec-Mlk}
\end{align}
with initial values $M_{\ell,0}=(2\ell-3)!!$ and $M_{\ell,1}=(\ell-1)(2\ell-3)!!$.
\end{itemize}
\end{pro*}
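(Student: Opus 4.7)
Part (i) is trivial: by the symmetry of the $\ell$ labels, the number of one-component galled networks in which any fixed $k$-subset of $\{1,\ldots,\ell\}$ appears as the labels of the leaves below the reticulations is $M_{\ell,k}$, independently of the subset, so ${\rm OGN}_{\ell,k}=\binom{\ell}{k}M_{\ell,k}$.

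My plan for part (ii) is to derive the recurrence by a deletion argument focused on the reticulation vertex $r$ whose unique child is the leaf labeled $k$. In a one-component galled network, $r$ lies in a unique gall: two internally edge-disjoint paths from a common tree-vertex ancestor $v$ down to $r$ via the two parents $u_1,u_2$ of $r$. Deleting $r$ together with its outgoing edge, and then suppressing the resulting degree-two vertices, produces a smaller one-component structure; I would assemble the recurrence by classifying the gall combinatorially and, for each class, counting the number of \emph{gall-insertions} on a base network that produce a distinct element of $M_{\ell,k}$.

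I expect three families, corresponding to the three groups of terms of the recurrence. First, when no other reticulation is involved in the gall of $r$, the deletion yields a network counted by $M_{\ell,k-1}$, and reversing the deletion produces $\ell+k-2$ admissible insertions; the factor $\ell+k-2$ is nothing other than the number of internal tree vertices of the base network (each a candidate apex $v$ for the minimal gall), read off from the degree-sum identity for binary phylogenetic networks. Second, a shortcut-type gall that shares part of its structure with exactly one other gall gives $(k-1)M_{\ell,k-2}$, the factor $k-1$ counting the choice of this accompanying reticulation. Third, a gall that contains $d\ge 1$ further reticulation attachments inside its paths contributes the summand in \eqref{rec-Mlk}: $\binom{k-1}{d}$ selects the participating reticulations, $(2d-1)!!$ enumerates the internal tree shape that organizes them along the gall, and the difference $M_{\ell-d,k-1-d}-M_{\ell+1-d,k-1-d}$ implements an inclusion--exclusion, with the prefactor $\tfrac12$ correcting for the two-sided symmetry of the gall.

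The main obstacle will be the third family: one must exhibit the precise bijection identifying the configurations whose contributions come from $M_{\ell-d,k-1-d}$ and those that must be subtracted via $M_{\ell+1-d,k-1-d}$, and one must argue carefully that the compensating symmetry is exactly $1/2$. A useful consistency check is provided by the base case $k=1$: the second and third groups of terms vanish, and the recurrence collapses to $M_{\ell,1}=(\ell-1)M_{\ell,0}=(\ell-1)(2\ell-3)!!$, which agrees with the independent stars-and-bars count $\binom{2\ell-2}{2}(2\ell-5)!!=(\ell-1)(2\ell-3)(2\ell-5)!!$ for attaching a single reticulation with leaf $1$ to a phylogenetic tree on $\ell-1$ leaves.
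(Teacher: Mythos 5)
The paper does not actually prove this proposition: it is imported verbatim from \cite{GuRaZh}, with only the parenthetical remark that part (i) is trivial. Your argument for (i) — symmetry of the labels over the $\binom{\ell}{k}$ choices of which leaves sit below reticulations — is exactly that remark made explicit, and is fine.

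For part (ii), however, what you have written is a plan, not a proof, and the gap sits precisely where you yourself flag it. You propose deleting the reticulation $r$ below leaf $k$ and classifying its gall into three families matching the three groups of terms, but you never (a) verify that deletion-plus-suppression always lands back in the class counted by $M_{\ell',k'}$ with the correct labeling convention on the leaves below the remaining reticulations, (b) show the three families are exhaustive and mutually exclusive, or (c) derive, rather than pattern-match, the factors $\binom{k-1}{d}$, $(2d-1)!!$, the difference $M_{\ell-d,k-1-d}-M_{\ell+1-d,k-1-d}$, and the prefactor $\tfrac12$. Reading $(2d-1)!!$ as ``the internal tree shape organizing $d$ attachments along the gall'' and $\tfrac12$ as ``two-sided symmetry of the gall'' is plausible numerology, but without an explicit bijection (and a proof that the insertion map neither misses nor overcounts configurations, e.g.\ when the new gall's apex interacts with an existing gall) the recurrence is not established. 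Even your first factor $\ell+k-2$ is only asserted to count candidate apices; one must check that every internal tree vertex of the base network admits exactly one admissible insertion and that distinct choices give distinct networks. The $k=1$ check is consistent but vacuous, since the recurrence is only claimed for $k\geq 2$ and $M_{\ell,1}$ is an initial value. In short: (i) is complete; (ii) is an outline whose hardest step — the inclusion–exclusion in the $d$-sum — remains entirely open, which is presumably why the paper cites \cite{GuRaZh} rather than reproving it.
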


\begin{figure}
    \vspace*{-0.45cm}\centering
    \includegraphics[width=0.9\textwidth]{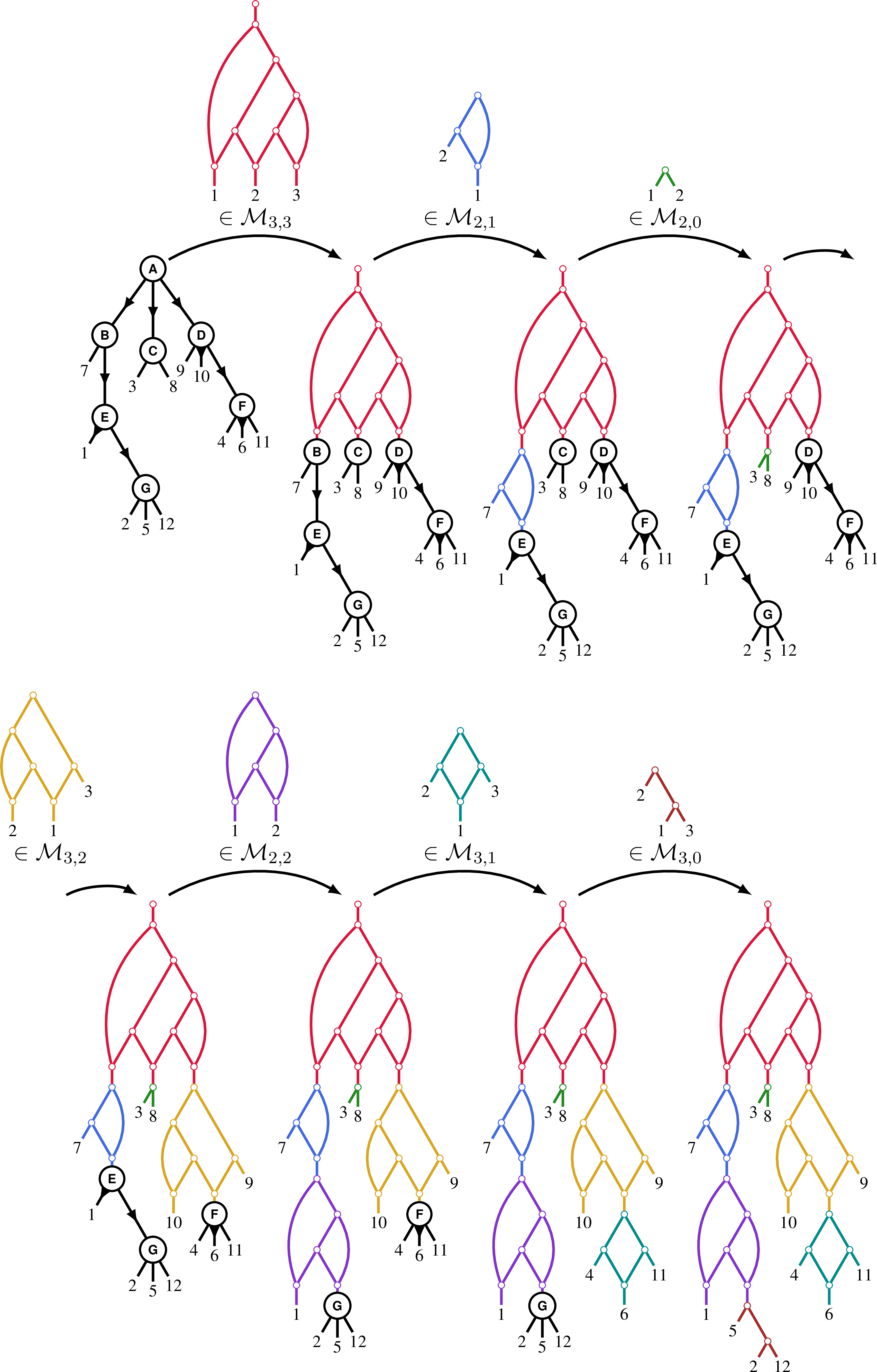}
    \caption{Step-by-step decompression of the component graph from Figure~\ref{fig:example_GN}; the nodes are processed in the order indicated and the one-component networks replacing nodes are above the arrows. Note that except for the first of these networks, the root edge has to be removed in all others.}\label{decomp-gn}
\end{figure}

\vspace*{0.2cm}Now, consider the component graph of a galled network; see Figure~\ref{decomp-gn} for an example. In order to decompress it, first pick a one-component galled network $N^{o}$ for the root $r$ of the component graph, where the number of leaves, say $\ell_r$ (the network over the first arrow in the example), of $N^{o}$ is the outdegree of $r$ and the number of reticulation vertices, say $k_r$, is the number of arrows on the outgoing edges of $r$. ($\ell_r=k_r=3$ in the example.) Moreover, let the leaves below the reticulation vertices in $N^{o}$ have labels $\{1,\ldots,k_r\}$. Next, remove $r$ from the component graph which gives a forest of $\ell_r$ trees. (The subtrees rooted at $B, C,$ and $D$ in the example.) The trees which have been attached by edges with arrows to $r$ (all trees in the example) go to the leaves below reticulation vertices in $N^{o}$ and the remaining trees (which consist of just one labeled vertex) are used the relabel the remaining leaves of $N^{o}$ in an order consistent way (i.e., the smallest goes to $k_r+1$, the second smallest to $k_r+2$, etc.). Moreover, for the trees which go to the leaves below reticulation vertices, the order in which they are attached is the increasing order of their smallest leaf label (i.e., the one with the smallest leaf label --- the subtree rooted at $B$ in the example as the smallest label of this subtree is $1$ --- goes to $1$, the one with the second smallest leaf label --- the subtree rooted at $C$ in the example as the smallest label of this subtree is $3$ --- goes to $2$, etc.). Then, continue with the non-leaf vertices of the trees in a recursive way. This gives, by picking all possible one-component networks in each step, all possible galled networks whose component graph is the given component graph.  Moreover, if the given component graph has no arrows (which for galled networks means it is just a phylogenetic tree), then arrows have to be placed on edges leading to non-leaf vertices, whereas on the pendant edges, we can freely decide whether we want to place an arrow or not. Overall, this procedure gives the following result from \cite{GuRaZh} for the number of galled networks with $\ell$ leaves, denoted by ${\rm GN}_{\ell}$.

\begin{thm*}[\cite{GuRaZh}]
For the number ${\rm GN}_{\ell}$ of galled networks with $\ell$ leaves,
\begin{equation}\label{form-GNl}
{\rm GN}_{\ell}=\sum_{{\mathcal T}}\prod_{v}\sum_{j=0}^{c_{\rm lf}(v)}\binom{c_{\rm lf}(v)}{j}M_{c(v),c_{\rm nlf}(v)+j},
\end{equation}
where the first sum runs over all (not necessarily binary) phylogenetic trees ${\mathcal T}$, the product runs over all internal vertices $v$ of ${\mathcal T}$, $c(v)$ is the outdegree of $v$, and $c_{{\rm lf}}(v)$ resp. $c_{{\rm nlf}}(v)$ are the
number of leaves resp. non-leaves amongst the children of $v$.
\end{thm*}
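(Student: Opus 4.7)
The plan is to argue by a decompression bijection. For each (not necessarily binary) phylogenetic tree $\mathcal{T}$, I count the galled networks $N$ with $\tilde{C}(N)=\mathcal{T}$, and then sum over $\mathcal{T}$. The theorem of \cite{GuYaZh} recalled above guarantees that $\tilde{C}(N)$ is always a phylogenetic tree for galled $N$, so partitioning galled networks by $\tilde{C}(N)$ accounts for all of them.

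Fix such a $\mathcal{T}$ and an internal vertex $v\in\mathcal{T}$. Reconstructing the tree-component at $v$, together with the reticulation edges leaving it, requires two pieces of data. First, one decides which of the $c(v)$ outgoing edges of $v$ are arrows (double edges) in $C(N)$. As explained in the excerpt, every edge going to a non-leaf child must be an arrow (such a child is the root of a non-trivial tree-component, which is rooted at a reticulation vertex and so has an arrowed incoming edge in $C(N)$), while edges going to leaf-children may be single or arrowed at will. Choosing $j$ of the $c_{\rm lf}(v)$ leaf-children to be arrowed contributes a factor $\binom{c_{\rm lf}(v)}{j}$, and the resulting total number of outgoing arrows at $v$ becomes $c_{\rm nlf}(v)+j$. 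Second, one fills in the tree-component at $v$ together with the reticulation vertices hanging off of it by a one-component galled network with $c(v)$ leaves and $c_{\rm nlf}(v)+j$ reticulation vertices; the children of $v$ in $\mathcal{T}$ correspond to the leaves of this one-component network, with the arrowed children going to the $c_{\rm nlf}(v)+j$ leaves below reticulations and the non-arrowed ones going to the remaining $c_{\rm lf}(v)-j$ leaves. The canonical relabeling convention described in the excerpt (order the children by smallest descendant leaf label, arrowed ones going to labels $1,\ldots,c_{\rm nlf}(v)+j$, the rest going to the remaining labels in the same order) eliminates any ambiguity, so the number of choices at this step is exactly $M_{c(v),\,c_{\rm nlf}(v)+j}$. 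Since the choices at different vertices of $\mathcal{T}$ are independent, multiplying over $v$ and summing over $\mathcal{T}$ yields \eqref{form-GNl}.

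The main obstacle is to verify that this decompression is actually a bijection. Well-definedness of the map from triples $(\mathcal{T},\text{arrow placement},\text{one-component choices})$ to galled networks is essentially automatic: each tree-component is galled (being one-component), and a reticulation vertex $r$ of the assembled $N$ sits at the root of its own tree-component, with the two arrows into $r$ starting from a common parent tree-component; the two paths from the lowest common ancestor (inside that parent tree-component) of the two parents of $r$, followed by the two reticulation edges, form a tree cycle whose intermediate vertices are all tree vertices, so each reticulation vertex is indeed in a tree cycle. The converse direction is the delicate one: given $N$, the tree $\mathcal{T}=\tilde{C}(N)$ is recovered by contracting tree-components, the arrow placement is recovered by marking reticulation edges, and the one-component network at each $v$ is read off from the induced subgraph on the tree-component together with the reticulation vertices hanging from it. The canonical labeling convention, applied in reverse, then determines the correspondence between the leaves of the one-component network and the children of $v$ uniquely, ensuring that no two distinct triples produce the same galled network.
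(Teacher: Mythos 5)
Your proposal is correct and takes essentially the same route as the paper: the paper cites this formula from Gunawan--Rathin--Zhang and justifies it by exactly the decompression procedure you describe (component graph is a phylogenetic tree by the Gunawan--Yan--Zhang characterization, arrows are forced on edges to non-leaf children and optional on pendant edges giving the $\binom{c_{\rm lf}(v)}{j}$ factor, and each internal vertex is replaced by one of $M_{c(v),c_{\rm nlf}(v)+j}$ one-component galled networks with the canonical order-consistent relabeling). Your additional verification that the assembled network is galled and that the map is injective is a welcome elaboration of details the paper leaves implicit.
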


By keeping track of the number of arrows, ${\rm GN}_{\ell,k}$ can be computed as well.

Next, a similar procedure can be used for reticulation-visible networks, too. First, observe that the classes of one-component reticulation-visible networks and one-component galled networks coincide (since both are, in fact, the class of one-component networks without any restrictions). Thus, we can again work with $M_{\ell,k}$ which satisfies (\ref{rec-Mlk}).

Now, consider the component graph of a reticulation-visible network (which by the above result is a particular tree-child network; also, assume that there are arrows on the edges); see Figure~\ref{decomp-rv} for an example. The decompressing works as for a galled networks, with the only difference being how the children of a node are attached to the leaves of the one-component network which replaces the node. More precisely, in which order should this be done? (Again, the children attached with edges having arrows go to the leaves below reticulation vertices, the others not.) Here, the tree-child property comes into play. It implies that that for every vertex, there exists a set of leaves which can only be reached from this vertex. All children of a node have such a set and we can order these sets (and consequently the children) according to their smallest elements. This is then used to attach the children of a node to the leaves of the one-component network which replaces the node. Overall, this gives the following result for the number of reticulation-visible networks with $\ell$ leaves, denoted by ${\rm RV}_{\ell}$, where we again start from component graphs without arrows.

\begin{figure}
    \centering
    \includegraphics[width=0.9\textwidth]{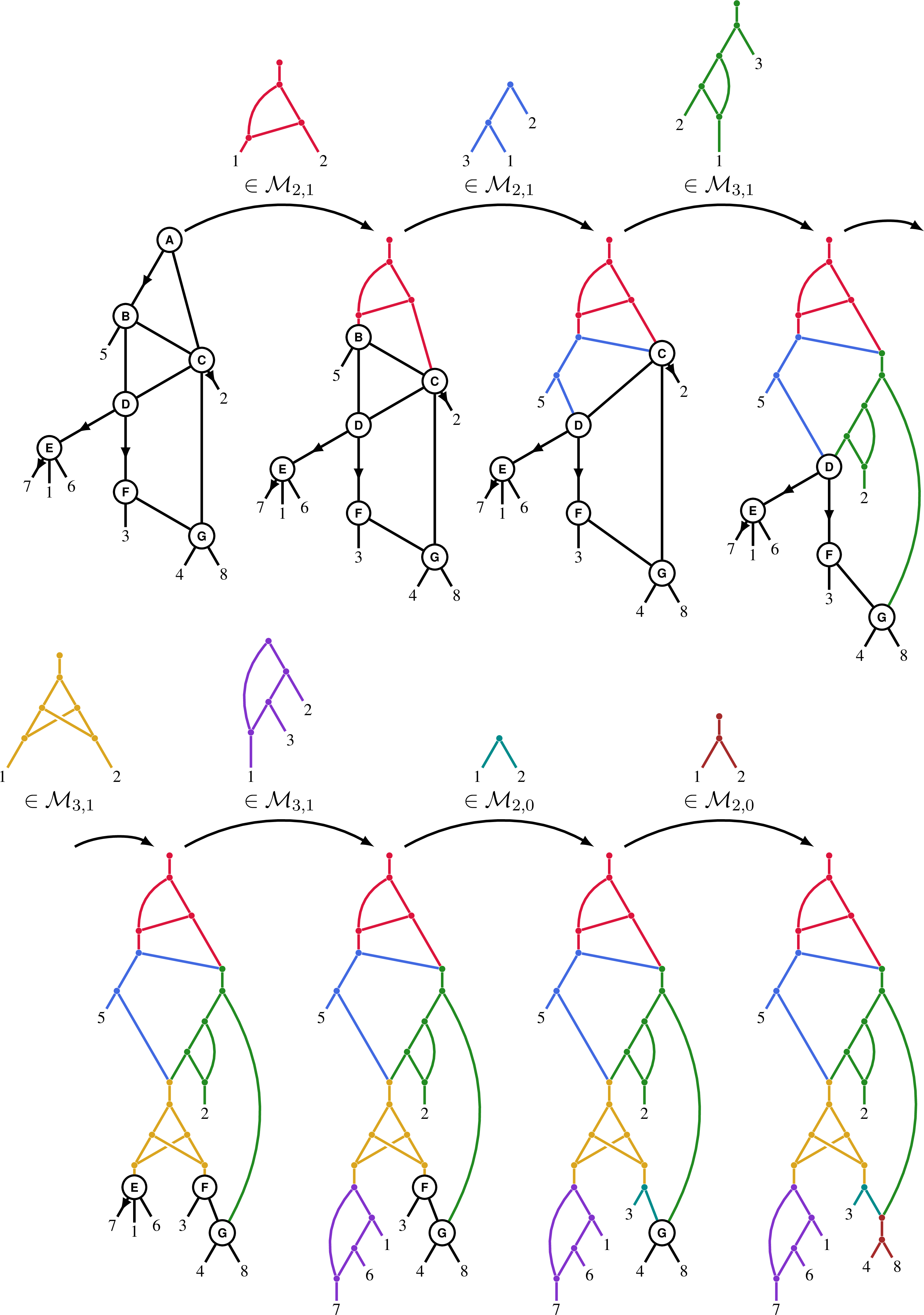}
    \vspace*{0.25cm}\caption{Step-by-step decompression of the component graph from Figure~\ref{fig:example_RV}; again the nodes are processed in the indicated order and the one-component networks are above the arrows. Here, the root edges of the latter networks have to be removed if and only if the replaced nodes in the component graph have indegree $1$ (and thus the incoming edge has an arrow on it).}\label{decomp-rv}
\end{figure}

\begin{thm}
For the number ${\rm RV}_{\ell}$ of reticulation-visible networks with $\ell$ leaves,
\begin{equation}\label{form-RVl}
{\rm RV}_{\ell}=\sum_{{\mathcal{TC}}}\prod_{v}\sum_{j=0}^{c_{\rm lf}(v)}\binom{c_{\rm lf}(v)}{j}M_{c(v),c_{1}(v)+j},
\end{equation}
where the first sum runs over all (not necessarily binary) tree-child networks with all vertices of indegree at most $2$ and no reticulation vertex has just one child that is, moreover, a tree vertex, the product runs over all internal vertices $v$ of the tree-child network, $c(v)$ is the outdegree of $v$, $c_{\rm lf}(v)$ is the number of leaves below $v$, and $c_{1}(v)$ is the number of children of $v$ which are not leaves and have indegree $1$.
\end{thm}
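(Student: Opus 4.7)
The plan is to adapt the decompression procedure that produced (\ref{form-GNl}) to the reticulation-visible setting. By part (ii) of the cited theorem from \cite{GuYaZh}, the component graphs $\tilde{C}(N)$ of reticulation-visible $N$ are exactly the non-binary tree-child networks appearing in the summation of (\ref{form-RVl}); so the problem reduces to counting, for a fixed such $\mathcal{TC}$, the number of reticulation-visible $N$ with $\tilde{C}(N)=\mathcal{TC}$, and then summing over $\mathcal{TC}$.

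At each internal vertex $v$ of $\mathcal{TC}$ with outdegree $c(v)$, I would first reintroduce the arrows that were suppressed on passing from $C(N)$ to $\tilde{C}(N)$. An edge in $C(N)$ between distinct tree-components is always a reticulation edge; consequently, a non-leaf, indegree-$1$, non-root vertex of $\tilde{C}(N)$ can only arise when the source vertex in $C(N)$ contributed both incoming reticulation edges, i.e.\ from a suppressed double edge. Hence each edge from $v$ to a non-leaf indegree-$1$ child must be arrowed, giving $c_1(v)$ forced arrows. Edges to the $c_{\rm lf}(v)$ leaf children of $v$ may optionally be arrowed (the arrowed ones corresponding to the terminal-vertex case in the definition of $C(N)$, i.e.\ single-leaf reticulate tree-components), contributing the factor $\binom{c_{\rm lf}(v)}{j}$ for $j$ chosen arrows. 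Edges from $v$ to the indegree-$2$ children of $\mathcal{TC}$ represent single reticulation edges whose companion enters the target from another tree-component and therefore need no arrow. Next, $v$ is replaced by a one-component network; since the one-component reticulation-visible networks coincide with the one-component galled networks, and since the reticulate leaves of this one-component network must carry the canonical labels $1,\ldots,c_1(v)+j$, there are exactly $M_{c(v),c_1(v)+j}$ choices.

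The matching between the $c(v)$ children of $v$ and the $c(v)$ labelled leaves of the one-component network is canonical by the tree-child property of $\mathcal{TC}$: every non-leaf vertex has a leaf visible only from it, so every child of $v$ has a well-defined smallest exclusive leaf-label, and listing the arrowed (respectively plain) children of $v$ in increasing order of this label and matching them to the reticulate (respectively ordinary) leaves of the one-component network in label order yields the unique consistent attachment. Multiplying the local factor $\sum_{j=0}^{c_{\rm lf}(v)}\binom{c_{\rm lf}(v)}{j}M_{c(v),c_1(v)+j}$ over the internal vertices $v$ of $\mathcal{TC}$ and summing over all admissible $\mathcal{TC}$ then yields (\ref{form-RVl}).

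The main obstacle is verifying that this decompression really is a bijection onto the class of reticulation-visible networks. That every reconstructed $N$ is reticulation-visible uses that each reticulation of a one-component piece is visible through its unique leaf child, and that the gluing of pieces preserves visibility precisely under the ``no reticulation vertex just followed by one tree vertex'' clause in part (ii) above, which prevents any reticulation of $N$ from being hidden behind a later reticulation. That distinct triples $(\mathcal{TC},\text{arrow placement},\text{one-component assignments})$ give distinct $N$ rests on the canonical ordering together with the observation that every reticulation of $N$ is encoded in exactly one of two ways in $\tilde{C}(N)$, either as an indegree-$2$ vertex (two reticulation parents in different tree-components) or as an indegree-$1$ vertex whose incoming edge had its arrow suppressed (two reticulation parents in the same tree-component); this case split will need to be carried out carefully.
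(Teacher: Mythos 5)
Your argument follows essentially the same route as the paper: invoking the characterization of component graphs of reticulation-visible networks from \cite{GuYaZh}, using that one-component reticulation-visible and one-component galled networks coincide (so that $M_{\ell,k}$ applies), placing forced arrows on edges to non-leaf indegree-$1$ children and optional arrows on pendant edges (giving $\binom{c_{\rm lf}(v)}{j}$), and fixing the attachment order canonically via the tree-child visibility property. This matches the paper's (informal) justification of \eqref{form-RVl}, and your additional remarks on why indegree-$2$ children need no arrow and on injectivity of the decompression only make the argument more explicit.
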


Again, by keeping track of arrows, we can compute ${\rm RV}_{\ell,k}$. We will do this in Section~\ref{ret-vis-net} to derive formulas for small values of $k$ and any $\ell$.

\section{Galled Networks}\label{gall-net}

In this section, we use the component graph method to count (both exactly and asymptotically) galled networks with a small number of reticulation vertices. In addition, we use generating function techniques and the method of singularity analysis which is described in Chapter VI of \cite{FlSe}. (We will use below some of the notation from that chapter.)

Since the component graph method is based on one-component galled networks, we start by analyzing their number. First, from the initial values and (\ref{rec-Mlk}), $M_{\ell,k}$ for small values of $k$ equals
\begin{align*}
M_{\ell,0}&=(2\ell-3)!!,\\
M_{\ell,1}&=(\ell-1)(2\ell-3)!!,\\
M_{\ell,2}&=(2\ell-1)(\ell-1)^2(2\ell-5)!!.
\end{align*}

From this, we observe the following pattern.
\begin{lmm}
For fixed $k\geq 1$,
\[
M_{\ell,k}=p_k(\ell)(2(\ell-k)-3)!!,\qquad (\ell\geq k),
\]
where $p_k(\ell)$ is a polynomial of degree $2k$ with leading coefficient $2^k$.
\end{lmm}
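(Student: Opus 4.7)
\medskip
\noindent\textbf{Proof proposal.} The plan is to proceed by induction on $k$, using the recurrence \eqref{rec-Mlk}. It is convenient to extend the statement to $k=0$ by setting $p_0(\ell)=1$ (which is consistent with $M_{\ell,0}=(2\ell-3)!!$ and the conventions ``degree $0$, leading coefficient $2^0=1$''); the cases $k=1,2$ then follow immediately from the closed-form expressions for $M_{\ell,1}$ and $M_{\ell,2}$ given just before the lemma, once one uses $(2\ell-3)!!=(2\ell-3)(2\ell-5)!!$ and $(2\ell-5)!!=(2\ell-5)(2\ell-7)!!$ to match the double factorial appearing in the statement.

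For the inductive step, assume the claim (including the case $k=0$) holds for all indices strictly less than $k$. The idea is to substitute $M_{\ell,k-1}=p_{k-1}(\ell)(2\ell-2k-1)!!$, $M_{\ell,k-2}=p_{k-2}(\ell)(2\ell-2k+1)!!$, $M_{\ell-d,k-1-d}=p_{k-1-d}(\ell-d)(2\ell-2k-1)!!$, and $M_{\ell+1-d,k-1-d}=p_{k-1-d}(\ell+1-d)(2\ell-2k+1)!!$ into \eqref{rec-Mlk}, and then pull out the common factor $(2\ell-2k-3)!!$ using the identities $(2\ell-2k-1)!!=(2\ell-2k-1)(2\ell-2k-3)!!$ and $(2\ell-2k+1)!!=(2\ell-2k+1)(2\ell-2k-1)(2\ell-2k-3)!!$. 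After this rewrite, dividing through yields
\begin{align*}
p_k(\ell)&=(\ell+k-2)(2\ell-2k-1)\,p_{k-1}(\ell)\\
&\quad+(k-1)(2\ell-2k+1)(2\ell-2k-1)\,p_{k-2}(\ell)\\
&\quad+\tfrac{1}{2}\sum_{d=1}^{k-1}\binom{k-1}{d}(2d-1)!!\Bigl[(2\ell-2k-1)\,p_{k-1-d}(\ell-d)\\
&\qquad\qquad\qquad\qquad\quad-(2\ell-2k+1)(2\ell-2k-1)\,p_{k-1-d}(\ell+1-d)\Bigr],
\end{align*}
which is manifestly a polynomial in $\ell$.

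It then remains to track the degree and leading coefficient. By the inductive hypothesis, the first term has degree $2+(2k-2)=2k$ with leading coefficient $1\cdot 2\cdot 2^{k-1}=2^k$. The second term has degree at most $2+(2k-4)=2k-2$, and each summand in the $d$-sum has degree at most $2+2(k-1-d)=2k-2d\le 2k-2$. Hence all contributions except the first fall strictly below $2k$, so $\deg p_k=2k$ with leading coefficient exactly $2^k$, as required.

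The only real obstacle is bookkeeping: one must be careful that the terms with $d=k-1$ (which involve $M_{\cdot,0}$) fit the common framework, which is precisely why it is convenient to include $k=0$ in the inductive statement with $p_0\equiv 1$. Everything else is a routine degree count.
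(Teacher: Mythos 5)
Your proposal is correct and follows essentially the same route as the paper: induction on $k$ via the recurrence \eqref{rec-Mlk}, with the degree-$2k$ term and the leading coefficient $2^k$ coming solely from $(\ell+k-2)M_{\ell,k-1}$ (the factor $2\ell^2$ times the leading term of $p_{k-1}$), all other contributions having degree at most $2k-2$. Your write-up simply makes explicit the double-factorial bookkeeping that the paper leaves implicit.
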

\begin{proof}
This follows by induction on $k$. First, the claim holds for $k=1, k=2$, and also for $k=0$ when $\ell\geq 1$. (This is why we used $(2(\ell-k)-3)!!$ instead of $(2(\ell-k+1)-3)!!$.)

Assume now that it holds for $2\leq k'<k$. Then, we observe that there are four terms on the right-hand side of (\ref{rec-Mlk}) where we have to plug in the induction hypothesis. After doing this and bringing the double factorial into the form $(2(\ell-k)-3)!!$, we see that the first term ($M_{\ell,k-1}$) has as multiplicative factor a polynomial in $\ell$ of degree $2k-1$ which gets multiplied by $\ell+k-2$ and thus becomes a polynomial of degree $2k$, the second term ($M_{\ell,k-2}$) produces a polynomial of degree $2k-2$, the third factor ($M_{\ell-d,k-1-d}$) yields a polynomial of degree $2k-1-2d$, and the polynomial of the final factor ($M_{\ell+1-d,k-1-d}$) is of degree $2k-2d$. Thus, by collecting all these polynomials, we see that $M_{\ell,k}$ has the desired form where the degree and leading term comes from that of the first term. The latter is the leading term of the polynomial of $M_{\ell,k-1}$ multiplied by $2\ell^2$ which proves the claim.
\end{proof}

Note that from the last result, we have
\begin{equation}\label{form-Fkl}
M_{\ell+k,k}=q_k(\ell)(2\ell-3)!!,\qquad (\ell\geq 0),
\end{equation}
where $q_k(\ell)$ is again a polynomial of degree $2k$ with leading term $2^k$. We next consider the exponential generating function of this quantity:
\[
F_k(z)=\sum_{\ell\geq 0}M_{\ell+k,k}\frac{z^{\ell}}{\ell!},
\]
where $M_{0,0}=0$. Then, from the expressions for $M_{\ell,k}$ for $k=0,1,2$, we obtain that:
\begin{equation}\label{F0-1}
F_{0}(z)=1-\sqrt{1-2z},\qquad F_{1}(z)=\frac{z}{(1-2z)^{3/2}}
\end{equation}
and
\begin{equation}\label{F2}
F_{2}(z)=\frac{3-z+7z^2-4z^3}{(1-2z)^{7/2}}.
\end{equation}
Also, we have the following asymptotic result for $F_{k}(z)$ for all $k\geq 1$, where we use the notion of $\Delta$-analyticity of a function $f(z)$ at $z_0$, i.e., $f(z)$ is an analytic function in a domain of the form
\[
\Delta:=\{z\ :\ \vert z\vert<r,\ \vert\arg(z-z_0)\vert>\phi\},
\]
for some $r>\vert z_0\vert$ and $0<\phi<\pi/2$.



\begin{lmm}
For $k\geq 1$, $F_k(z)$ is $\Delta$-analytic for some $\Delta$-domain at $1/2$ and satisfies, as $z\rightarrow 1/2$, in the $\Delta$-domain:
\begin{equation}\label{asym-Fkz}
F_k(z)\sim\frac{(4k-3)!!}{2^k(1-2z)^{2k-1/2}}.
\end{equation}
\end{lmm}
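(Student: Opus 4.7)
My plan is to prove the asymptotic by strong induction on $k$, using the recurrence (\ref{rec-Mlk}) rewritten as a functional equation for the generating functions $M_k(z)$. The base case $k=1$ follows immediately from the explicit formula $M_1(z) = (1-\sqrt{1-2z})^2/(2\sqrt{1-2z})$: the numerator tends to $1$ as $z\to 1/2$, so $M_1(z)\sim 1/(2\sqrt{1-2z})$, which agrees with $(2\cdot 1-3)!!/2^1 = 1/2$, and $\Delta$-analyticity at $1/2$ is evident from the closed form. The case $k=2$ can likewise be verified directly from the given expression for $M_2(z)$, supplying the two starting values needed by the recurrence.

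For the inductive step with $k\geq 3$, multiplying (\ref{rec-Mlk}) by $z^{\ell}/\ell!$ and summing over admissible $\ell$ yields, via the identity $\sum_\ell \ell f_\ell z^\ell/\ell!=zF'(z)$ and the observation that the coefficient shift $\ell\mapsto \ell-j$ is encoded by $j$-fold indefinite integration from $0$,
\[
M_k(z) = z M_{k-1}'(z) + (k-2) M_{k-1}(z) + (k-1) M_{k-2}(z) + S_k(z),
\]
where $S_k(z) = \tfrac12\sum_{d=1}^{k-1}\binom{k-1}{d}(2d-1)!!\bigl[(I^{(d)}M_{k-1-d})(z) - (I^{(d-1)}M_{k-1-d})(z)\bigr]$ and $I^{(j)}$ denotes $j$-fold iterated integration from $0$. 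The induction hypothesis $M_{k-1}(z)\sim (2k-5)!!/(2^{k-1}(1-2z)^{k-3/2})$, together with termwise differentiation inside the $\Delta$-domain (justified by standard transfer theorems for $\Delta$-analytic functions), gives
\[
z M_{k-1}'(z) \sim \frac{1}{2}\cdot\frac{(2k-5)!!\,(2k-3)}{2^{k-1}(1-2z)^{k-1/2}} = \frac{(2k-3)!!}{2^k(1-2z)^{k-1/2}},
\]
which is exactly the claimed leading term.

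It then remains to show that every other contribution on the right is strictly less singular at $1/2$. The terms $(k-2)M_{k-1}(z)$ and $(k-1)M_{k-2}(z)$ are of orders $(1-2z)^{-(k-3/2)}$ and $(1-2z)^{-(k-5/2)}$ respectively (or analytic in the degenerate cases $k-2\leq 0$). Inside $S_k(z)$, by the induction hypothesis $M_{k-1-d}(z)$ has singular order $k-d-3/2$ when $k-1-d\geq 1$, and each iterated integration decreases the singular order by one; a quick accounting shows the most singular contribution comes from $d=1$, equals $-\tfrac{k-1}{2}M_{k-2}(z)$ plus strictly less singular corrections, and is therefore of order $(1-2z)^{-(k-5/2)}$, still safely below the leading order. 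The summands with $k-1-d\leq 0$ involve $M_0(z)=1-\sqrt{1-2z}$, which vanishes at $1/2$, and whose iterated integrals remain regular there. Combining these estimates closes the induction.

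Finally, $\Delta$-analyticity propagates along the construction: $M_0(z)$ is $\Delta$-analytic at $1/2$, and polynomial multiplication, differentiation, and indefinite integration along a path in a $\Delta$-domain all preserve $\Delta$-analyticity within a common such domain. The main obstacle I anticipate is the careful exponent bookkeeping in $S_k(z)$ — in particular making sure that the $d=k-1$ summand (involving the non-singular $M_0$) contributes only analytic terms, and that the cancellations between $I^{(d)}$ and $I^{(d-1)}$ do not accidentally boost the singular order of any summand above $k-5/2$ — but since every non-leading term only reduces the singular exponent, the dominant balance of $zM_{k-1}'(z)$ against the rest delivers the asymptotic unambiguously.
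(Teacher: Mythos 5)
Your proof is correct, but it takes a genuinely different route from the paper. The paper does not work with the recurrence at the generating-function level at all: it first establishes (in Lemma~\ref{form-Mkl}, by induction on the coefficient recurrence) the closed form $M_{\ell,k}=p_k(\ell)\,(2(\ell-k)-3)!!$ with $p_k$ of degree $2k$ and leading coefficient $2^k$, and then obtains the singular expansion of $M_k(z)$ almost for free by writing $M_k(z)=p_k(D)P_k(z)$ with $D=z\frac{d}{dz}$ and $P_k(z)=\sum_{\ell\ge k}(2(\ell-k)-3)!!\,z^\ell/\ell!$, so that the top-degree term $2^kD^{2k}$ dictates the asymptotics. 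You instead run the induction directly on the exponential generating functions, translating the shift $\ell\mapsto\ell-d$ into iterated integration and using the singular differentiation/integration closure properties of singularity analysis to identify $zM_{k-1}'(z)$ as the dominant term; your exponent bookkeeping for $S_k(z)$ (most singular contribution $-\tfrac{k-1}{2}M_{k-2}(z)$ of order $(1-2z)^{-(k-5/2)}$) and your leading-constant computation $\tfrac12\cdot(2k-3)\cdot(2k-5)!!/2^{k-1}=(2k-3)!!/2^k$ are both right. Your approach is self-contained and avoids proving the polynomial structure of $M_{\ell,k}$, at the cost of heavier analytic machinery (singular differentiation under a one-term $o$-expansion, which is indeed covered by the transfer theorems you cite); the paper's approach yields the stronger structural Lemma~\ref{form-Mkl}, which it reuses anyway, and makes the generating-function lemma a two-line consequence. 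One small point you should make explicit: since $M_{k'}(z)$ sums over $\ell\ge k'$, summing the recurrence over $\ell\ge k$ produces your functional equation only up to finitely many boundary monomials in $z$; these are entire and hence harmless for the singular expansion, but the identity as you state it is not exact.
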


\begin{Rem}
The asymptotic (\ref{asym-Fkz}) is called {\it singularity expansion} of $F_k(z)$ at $z\rightarrow 1/2$. Note that $k=0$ is not included since from (\ref{F0-1}), $F_0(z)\sim 1$, as $z\rightarrow 1/2$, which does not follow the pattern from (\ref{asym-Fkz}).
\end{Rem}

\vspace*{-0.35cm}
\begin{proof}
First, set
\begin{equation}\label{Pz}
P(z):=\sum_{\ell\geq 0}(2\ell-3)!!\frac{z^{\ell}}{\ell!}=2-\sqrt{1-2z}.
\end{equation}
From (\ref{form-Fkl}), we see that the function $F_k(z)$ is built from $P(z)$ as a linear combination of the power series
\[
D^{j}P(z),
\]
where $0\leq j\leq 2k$ and $D^{j}$ is the $j$-th iteration of $D:=z\frac{d}{dz}$. Since $P(z)$ is clearly $\Delta$-analytic, and $\Delta$-analyticity is closed under differentiation (see Theorem VI.8 in \cite{FlSe}), multiplication by $z$, and linear combination (see Section VI.6 in \cite{FlSe}), we obtain that $F_k(z)$ is $\Delta$-analytic. In addition, the singularity expansion of the derivative is obtained by differentiating the singularity expansion of a function (again see Theorem VI.8 in \cite{FlSe}) and obvious rules hold for multiplying with $z$ (which introduces a factor of $1/2$, as $z\rightarrow 1/2$) and taking linear combinations (see again Section VI.6 in \cite{FlSe}). Thus, the dominant term in the singularity expansion expansion of $F_k(z)$ arises from the highest derivative, and so we have, as $z\rightarrow 1/2$,
\[
F_k(z)\sim 2^kD^{2k}P(z)\sim\frac{(4k-3)!!}{2^k(1-2z)^{2k-1/2}},
\]
where the second asymptotic equivalence follows by differentiating (\ref{Pz}) $2k$ times, multiplying with $1/2$ after every differentiation, and multiplying the final result by $2^k.$
\end{proof}


We now consider general galled networks which are built from one-component galled networks and component graphs. This was described in detail in Section~\ref{cgm} and the recursive method there can be translated into generating functions (because the component graphs are trees). Since we are interested in reticulation vertices, we need to keep track of them. We therefore consider the generating function
\[
G(z,v):=\sum_{\ell\geq 0}\sum_{k\geq 0}{\rm GN}_{\ell,k}\frac{z^{\ell}}{\ell!}v^{k}.
\]
Then, we have the following result.
\begin{pro}\label{prop-Gzv}
We have,
\begin{equation}\label{Gzv}
G(z,v)=\sum_{j\geq 0}F_j(z)\frac{(vG(z,v))^j}{j!}.
\end{equation}
\end{pro}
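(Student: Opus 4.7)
The plan is to prove the proposition by a combinatorial decomposition of a galled network at its root tree-component, and then to translate the resulting bijection into an identity of exponential generating functions. This is the ``decompression'' procedure sketched in Section~\ref{cgm}, now carried out one level at a time.

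Given a galled network $N$ on $\ell$ labeled leaves with $k$ reticulation vertices, I would let $T_r$ denote its root tree-component. The galled hypothesis, via the theorem of~\cite{GuYaZh} stating that $\tilde{C}(N)$ is a phylogenetic tree, ensures that both parents of every reticulation vertex lie in a common tree-component; in particular, for each reticulation of $N$ that is a child of $T_r$ in the component graph, both of its parents lie inside $T_r$. Writing $j$ for the number of such reticulations, one obtains a one-component galled network $O$ by adding a placeholder leaf below each of them; thus $O$ has $j$ reticulations and $c$ leaves in total, of which $c-j$ are genuine leaves of $N$ and $j$ are placeholders carrying the canonical labels $1,\ldots,j$. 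Beneath each placeholder sits the sub-DAG $N_i$ of $N$ rooted at the child of the corresponding reticulation, and each $N_i$ is itself a galled network, since its component graph is the sub-tree of $\tilde{C}(N)$ rooted at the corresponding node and is again a tree. Conversely, given a one-component galled $O$ with $j$ canonically labeled placeholder leaves, an unordered multi-set $\{N_1,\ldots,N_j\}$ of galled sub-networks, and a partition of $[\ell]$ into labels for the non-placeholder leaves of $O$ and the leaves of the $N_i$'s, one reconstructs $N$ by assigning the labels destined for $O$ to its non-placeholder leaves in increasing order and then attaching the sub-network with the $i$th smallest minimum leaf label beneath the placeholder labeled $i$. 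This canonical ``smallest-leaf-label'' rule makes the attachment deterministic, so the two operations are mutually inverse.

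Translated into EGFs, the top piece contributes exactly
\[
F_j(z) = \sum_{\ell_0\ge 0} M_{\ell_0+j,\,j}\,\frac{z^{\ell_0}}{\ell_0!}
\]
by its very definition; the $j$ reticulations of $O$ contribute a factor of $v^j$; and the unordered multi-set of $j$ labeled galled sub-networks contributes $G(z,v)^j/j!$ via the standard labeled set construction. Summing over $j\ge 0$ yields
\[
G(z,v) = \sum_{j\ge 0} F_j(z)\,\frac{(vG(z,v))^j}{j!},
\]
with the $j=0$ term accounting precisely for pure phylogenetic trees. The main technical subtlety I would verify carefully is that the canonical smallest-leaf-label ordering really does absorb the $j!$-fold ambiguity of the placeholder labels on $O$ (which is what makes the $1/j!$ correct rather than absent), and relatedly that each sub-DAG $N_i$ really is a galled network in the sense of Definition~\ref{def-net-classes}; once these two structural points are established, the EGF identity is routine bookkeeping.
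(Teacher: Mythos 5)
Your proposal is correct and follows essentially the same route as the paper: decompose a galled network at its root tree-component into a one-component galled network (giving $F_j(z)$ and $v^j$) together with an unordered collection of galled subnetworks hanging below its reticulations (giving $G(z,v)^j/j!$), with the labeled product of EGFs absorbing the relabeling. You simply spell out in more detail the bijection and the smallest-leaf-label convention that the paper delegates to its description of the decompression procedure in Section~\ref{cgm}.
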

\begin{proof}
We use symbolic combinatorics as described in Chapter II and Chapter III of \cite{FlSe}. Note that the decompression procedure of component graphs from Section~\ref{cgm} entails that every galled network is built from a one-component galled network (which was used to replace the root in the decompression procedure) whose leaves below reticulation vertices are replaced by an unordered sequence of galled networks. If the former has $j$ reticulation vertices, it is counted by $F_j(z)$ and the latter is then counted by $(vG(z,v))^j/j!$ where $v$ counts reticulation vertices, $G(z,v)^j$ counts ordered sequences of galled networks and the factor $1/j!$ is needed to take away the order. Next, the product of these two generating functions counts the galled networks which are re-labelled as described in Section~\ref{cgm} since the product of exponential generating functions corresponds to the product of labeled combinatorial classes; see Chapter II in \cite{FlSe}. Finally, summing over $j$ gives the claimed result.
\end{proof}

The exponential generating function for the number of galled networks with $k$ reticulation vertices, i.e.,
\[
E_k(z):=\sum_{\ell\geq k}{\rm GN}_{\ell,k}\frac{z^{\ell}}{\ell!}
\]
is obtained from $G(z,v)$ by partial differentiation and evaluating at $v=0$:
\[
E_k(z)=\frac{1}{k!}\frac{\partial^k}{\partial v^k}G(z,v)\Big\vert_{v=0}.
\]
From Proposition~\ref{prop-Gzv}, we obtain a recurrence.
\begin{lmm}
For $k\geq 1$,
\begin{equation}\label{exp-Ekz}
E_k(z)=\sum_{j=1}^{k}\frac{F_j(z)}{j!}\sum_{\ell_1+\cdots+\ell_j=k-j}E_{\ell_1}(z)\cdots E_{\ell_j}(z).
\end{equation}
\end{lmm}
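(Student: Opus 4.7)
The plan is to derive the recurrence by direct coefficient extraction from the functional equation (\ref{Gzv}) in Proposition~\ref{prop-Gzv}. Since $E_k(z) = \frac{1}{k!}\partial_v^k G(z,v)\vert_{v=0}$, the function $G(z,v)$ has the formal power series expansion $G(z,v) = \sum_{k\geq 0} E_k(z) v^k$ in the variable $v$. Substituting this expansion into (\ref{Gzv}) and using $(vG(z,v))^j = v^j G(z,v)^j$, the identity becomes
\[
\sum_{k\geq 0} E_k(z) v^k = \sum_{j\geq 0} \frac{F_j(z)}{j!}\, v^j\, G(z,v)^j.
\]

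Next, I would extract $[v^k]$ from both sides for $k\geq 1$. On the left, this simply gives $E_k(z)$. On the right, the $j=0$ summand contributes only to $[v^0]$ and hence vanishes for $k\geq 1$; the summands with $j>k$ contribute nothing because of the prefactor $v^j$. For each $1\leq j\leq k$, the coefficient of $v^k$ in $v^j G(z,v)^j$ equals the coefficient of $v^{k-j}$ in $G(z,v)^j$, and expanding the $j$-fold product gives
\[
[v^{k-j}] G(z,v)^j = \sum_{\ell_1+\cdots+\ell_j = k-j} E_{\ell_1}(z)\cdots E_{\ell_j}(z).
\]
Collecting these contributions yields precisely (\ref{exp-Ekz}).

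There is no real obstacle here since the argument is a routine formal power series manipulation; the only bookkeeping point worth flagging is the observation that the $j=0$ term of (\ref{Gzv}) is $F_0(z)$ and therefore only affects $E_0(z)$, so that the summation on the right of (\ref{exp-Ekz}) can start at $j=1$ for all $k\geq 1$. One could alternatively prove the lemma by applying Leibniz's rule to compute $\partial_v^k$ of (\ref{Gzv}) at $v=0$, but the coefficient-extraction approach is more transparent and avoids the combinatorial factors that arise from differentiating $(vG(z,v))^j$.
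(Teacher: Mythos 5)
Your proof is correct and is essentially the same as the paper's: both derive the recurrence by isolating the degree-$k$ part in $v$ of the functional equation (\ref{Gzv}), the paper via $k$-fold differentiation at $v=0$ with Leibniz's rule and you via formal extraction of $[v^k]$, which are equivalent operations. Your coefficient-extraction phrasing is, if anything, slightly cleaner since it sidesteps the multinomial bookkeeping in the paper's computation.
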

\begin{proof}
Differentiating (\ref{Gzv}) $k$-times and evaluating at $v=0$ gives
\begin{align*}
E_{k}(z)&=\frac{1}{k!}\sum_{j=1}^{k}\frac{F_j(z)}{j!}\frac{d^k}{dv^k}\left(vG(z,v)\right)^j\Big\vert_{v=0}\\
&=\frac{1}{k!}\sum_{j=1}^{k}\frac{F_j(z)}{j!}\binom{k}{j}j!\frac{d^{k-j}}{dz^{k-j}}G(z,v)^j\\
&=\sum_{j=1}^{k}\frac{F_j(z)}{j!(k-j)!}\sum_{\ell_1+\cdots+\ell_{j}=k-j}\binom{k-j}{\ell_1,\cdots,\ell_j}\ell_1!E_{\ell_1}(z)\cdots \ell_j!E_{\ell_j}(z).
\end{align*}
The claim is obtained by writing the multinomial coefficients as factorials and canceling terms.
\end{proof}

We have now everything ready to prove Theorem~\ref{exact-GN}.

\begin{proof}[Proof of Theorem~\ref{exact-GN}.]
Note that $E_0(z)=1-\sqrt{1-2z}$. Then, from (\ref{exp-Ekz}) and (\ref{F0-1}):
\[
E_1(z)=F_1(z)E_0(z)=\frac{z(1-\sqrt{1-2z})}{(1-2z)^{3/2}}.
\]
Moreover, by using once again (\ref{exp-Ekz}) and (\ref{F0-1}) as well as (\ref{F2}):
\begin{align*}
E_2(z)&=F_1(z)E_1(z)+\frac{F_2(z)E_0(z)^2}{2}\\
&=\frac{12z^4-18z^3+17z^2-36z+21+(12z^3-10z^2+15z-21)\sqrt{1-2z}}{3(1-2z)^{7/2}}.
\end{align*}
Extracting coefficients gives the claimed result for ${\rm GN}_{\ell,2}$.

As for ${\rm GN}_{\ell,3}$, the same method can be used, only the resulting computation is more tedious (and therefore best done with mathematical software, e.g., Maple).
\end{proof}

What is left is to prove the asymptotic counting result for ${\rm GN}_{\ell,k}$ from Theorem~\ref{asymp-GN}. This will follow from the following result for the singularity expansion of $E_k(z)$ which is obtained from (\ref{exp-Ekz}) and induction.
\begin{pro}\label{asymp-Ekz}
For $k\geq 1$, $E_k(z)$ is $\Delta$-analytic for some $\Delta$-domain at $1/2$ and satisfies, as $z\rightarrow 1/2$, in the $\Delta$-domain:
\[
E_k(z)\sim\frac{(4k-3)!!}{k!2^k(1-2z)^{2k-1/2}}.
\]
\end{pro}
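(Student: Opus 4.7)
The plan is to proceed by induction on $k$, using the recurrence (\ref{exp-Ekz}) together with the asymptotic expansion for $F_k(z)$ from (\ref{asym-Fkz}). The base case $k=1$ is immediate from $E_1(z)=F_1(z)E_0(z)$: since $E_0(z)=1-\sqrt{1-2z}$ extends $\Delta$-analytically and tends to $1$ as $z\to 1/2$, the corollary gives $E_1(z)\sim F_1(z)\sim 1/(2(1-2z)^{3/2})$, which matches the claim. Note that $\Delta$-analyticity is preserved along the way because $F_j$ and each $E_\ell$ with $\ell<k$ admit a common $\Delta$-domain at $1/2$ by the induction hypothesis, and sums and products of $\Delta$-analytic functions on a common $\Delta$-domain are again $\Delta$-analytic.

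For the inductive step, I would assume the result for all $\ell$ with $1\leq\ell<k$ and consider a generic summand
\[
\frac{F_j(z)}{j!}E_{\ell_1}(z)\cdots E_{\ell_j}(z),\qquad \ell_1+\cdots+\ell_j=k-j,
\]
in (\ref{exp-Ekz}). Let $s$ be the number of indices with $\ell_i=0$; the remaining $j-s$ factors are nonzero. Using the corollary for $F_j$, the induction hypothesis for the nonzero $E_{\ell_i}$, and $E_0(z)\to 1$, the order of the singularity of this term at $z=1/2$ is
\[
(2j-\tfrac{1}{2})+\sum_{\ell_i\geq 1}\bigl(2\ell_i-\tfrac{1}{2}\bigr)=2j-\tfrac{1}{2}+2(k-j)-\tfrac{j-s}{2}=2k-\tfrac{1}{2}-\tfrac{j-s}{2}.
\]
This is maximized precisely when $j-s=0$, which forces all $\ell_i=0$ and hence $j=k$; any other choice satisfies $j<k$, forces at least one $\ell_i\geq 1$, and thus gives an order $\leq 2k-1$, which is strictly smaller than $2k-1/2$.

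Hence the dominant contribution to $E_k(z)$ is the single term $F_k(z)E_0(z)^k/k!$, and since $E_0(z)\to 1$ as $z\to 1/2$ in the $\Delta$-domain, (\ref{asym-Fkz}) yields
\[
E_k(z)\sim\frac{F_k(z)}{k!}\sim\frac{(4k-3)!!}{k!\,2^k(1-2z)^{2k-1/2}},
\]
as claimed. The only subtle point is the bookkeeping that shows all non-dominant terms contribute at strictly smaller singular order; this is really the core of the argument, but once the exponent computation above is in place, everything else is a routine application of the closure properties of singularity analysis (Section VI.6 of \cite{FlSe}). Theorem~\ref{asymp-GN} then follows by applying standard singularity-analysis transfer theorems to extract the coefficient asymptotics and using $(4k-3)!!/k!=\binom{2k-1}{k}\cdot 2^{k-1}(2k-1)!!/\text{appropriate factor}$, matching the constant $2^{k-1}\sqrt{2}/k!$ after the standard computation with $[z^\ell](1-2z)^{-(2k-1/2)}$.
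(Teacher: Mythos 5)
Your proof is correct and follows essentially the same route as the paper: induction on $k$ via the recurrence (\ref{exp-Ekz}), with the exponent bookkeeping showing that the unique dominant summand is $F_k(z)E_0(z)^k/k!$, so that $E_k(z)\sim F_k(z)/k!$. (The closing aside about deriving Theorem~\ref{asymp-GN} is garbled, but it is not part of the statement being proved and does not affect the argument.)
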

\begin{proof}
Note that
\[
E_0(z)=F_0(z)=1-\sqrt{1-2z}\sim 1.
\]
Thus, if $k=0$ is included in the claim, then the power of $1-2z$ in the denominator is $\max\{2k-1/2,0\}$.

We use induction on $k$. For $k=1$, the claim holds since $E_1(z)\sim F_1(z)$ which has the desired form by (\ref{asym-Fkz}). Thus, we can assume that the claim holds for $k'<k$. We need to prove it for $k$. Plugging the induction hypothesis into (\ref{exp-Ekz}) and using (\ref{asym-Fkz}), we obtain that for the terms inside the double sum of (\ref{exp-Ekz}):
\[
F_j(z)E_{\ell_1}(z)\cdots E_{\ell_j}(z)\sim c(1-2z)^{-(2j-1/2+\max\{2\ell_1-1/2,0\}+\cdots+\max\{2\ell_j-1/2,0\})},
\]
where $c$ is a suitable constant and $\ell_1+\cdots+\ell_j=k-j$. The term inside the bracket is maximized if and only if $j=k$ and thus $\ell_1=\cdots=\ell_{k}=0$. This shows that
\[
E_k(z)\sim\frac{F_k(z)}{k!}
\]
which by (\ref{asym-Fkz}) gives the claim.
\end{proof}

Now, we can prove Theorem~\ref{asymp-GN} for $k\geq 1$.
\begin{proof}[Proof of Theorem~\ref{asymp-GN}.]
From Proposition~\ref{asymp-Ekz} and Corollary VI.1 in \cite{FlSe}:
\begin{align*}
{\rm GN}_{\ell,k}=\ell![z^{\ell}]E_k(z)&\sim \ell!\frac{(4k-3)!!}{k!2^k\Gamma(2k-1/2)}[z^n](1-2z)^{-2k+1/2}\\
&\sim \ell!\frac{(4k-3)!!}{k!2^k\Gamma(2k-1/2)}2^{\ell} \ell^{2k-3/2}.
\end{align*}
Note that
\[
\Gamma(2k-1/2)=2^{-2k+1}(4k-3)!!\sqrt{\pi}.
\]
Plugging this into the above expression and using Stirling's formula gives the claimed result.
\end{proof}

\begin{Rem}
It is easily verified that Theorem~\ref{asymp-GN} holds for $k=0$, too.
\end{Rem}

\section{Reticulation-Visible Networks}\label{ret-vis-net}

In this section, we prove the formulas for the number of reticulation-visible networks with $\ell$ leaves and $k$ reticulation vertices with $k=2$ and $k=3$ to establish Theorem~\ref{exact-RV}.

\newpage

Let $N$ be a reticulation-visible network. We remove the leaves and their pendant edges from $C(N)$ except those whose pendant edge has an arrow on it. Then, the resulting component graph is a (unlabeled) rooted simple DAG with every non-root vertex of indegree $2$. (Note that an edge with an arrow on it is actually a double edge and thus counts as two edges.) Moreover, this DAG has exactly $k+1$ vertices. For $k=2$ and $k=3$, all these DAGs are listed in Figure~8 in \cite{CaZh}; see also Figure~\ref{fig:compGraphOfRVk2} and Figure~\ref{fig:compGraphOfRVk3} below.

Using these DAGs and the decompression procedure explained in Section~\ref{cgm}, we can derive the exponential generating function of ${\rm RV}_{\ell,k}$.

\begin{pro}\label{exp-RV}
Let $\mathcal{D}_{m}$ be the set of (unlabelled) rooted DAGs with $m$ vertices in which non-root vertices have indegree $2$ and double edges between two vertices are allowed. Then, for given $k$,
\begin{equation}
\sum_{\ell\geq 1}\,{\rm RV}_{\ell,k}\,\frac{z^\ell}{\ell!}=\sum_{G\in \mathcal{D}_{k+1}}\,\frac{1}{{\rm m}(G)}\prod_{v \in G}\,\sum_{\ell \geq \ell_0}\,M_{\ell+c(v),c_1(v)}\,\frac{z^\ell}{\ell!},
\end{equation}
where ${\rm m}(G)$ counts symmetries in $G$, the product runs over all vertices in $G$, and the final sum on the right-hand side is the generating function with respect to the number of labeled leaves which are attached to $v$, where $c(v)$ is the outdegree of $v$ in $G$, $c_1(v)$ is the number of children of $v$ with arrow on their edges, and $\ell_0$ is $0$ or $1$ according to whether $c_1(v)>0$ or $c_1(v)=0$, respectively.
\end{pro}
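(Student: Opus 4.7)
The plan is to decompose a reticulation-visible network $N$ as a pair (unlabelled skeleton $G\in\mathcal{D}_{k+1}$, filling-in data), and then read the resulting decomposition generating-function-wise. I would carry this out in three steps: (i) define the skeleton map $N\mapsto G(N)$ and verify that it lands in $\mathcal{D}_{k+1}$; (ii) invert it vertex by vertex and identify the number of fillings at each vertex with $M_{\ell+c(v),c_1(v)}$; (iii) pass from labelled to unlabelled skeletons via the symmetry factor $1/\mathrm{m}(G)$.

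For (i), the cited theorem from \cite{GuYaZh} tells us $\tilde{C}(N)$ is a tree-child network with every vertex of indegree at most two. The vertices of $C(N)$ are in bijection with the tree-components of $N$, which are indexed by the root of $N$ together with the $k$ reticulations, so $C(N)$ has $k+1$ vertices. Stripping the pendant labelled leaves whose incident edges are not arrows preserves this count, and with the convention that an arrow edge counts as two incoming edges, every non-root vertex of the stripped graph has indegree~$2$, so the stripped graph lies in $\mathcal{D}_{k+1}$.

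For (ii), I would fix $G\in\mathcal{D}_{k+1}$ together with an arbitrary labelling of its vertices and reverse the decompression of Section~\ref{cgm}. At a vertex $v$ of $G$ with outdegree $c(v)$ and $c_1(v)$ outgoing arrow edges, the one-component network $O_v$ replacing $v$ carries $c_1(v)$ reticulations and $\ell+c(v)$ leaves for some $\ell\ge \ell_0$: its $c_1(v)$ reticulation-leaves (labelled $1,\dots,c_1(v)$ by the convention behind $M_{\cdot,\cdot}$) accept the arrow children of $v$; the next $c(v)-c_1(v)$ non-reticulation leaves accept the single-edge children; and the remaining $\ell$ leaves carry genuine labels. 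The tree-child property of $\tilde{C}(N)$ assigns each child of $v$ a unique smallest descendant leaf label, and this canonical ordering prescribes a unique gluing of children to slots, giving the count $M_{\ell+c(v),c_1(v)}$. The bound $\ell\ge \ell_0$ records the non-degeneracy constraint on $O_v$: if $c_1(v)>0$ the reticulations already force at least one leaf in $O_v$, so $\ell=0$ is permitted, while if $c_1(v)=0$ the tree-component at $v$ must contain at least one honest labelled leaf, forcing $\ell\ge 1$.

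Distributing the $\ell$ leaf-labels of $N$ across the vertices of $G$ is the standard labelled product, so the exponential generating functions at the individual vertices multiply. Summing over labelled skeletons and identifying $|\mathrm{Aut}(G)|$ with $\mathrm{m}(G)$ then converts the sum into one over isomorphism classes in $\mathcal{D}_{k+1}$, yielding the stated identity. The main obstacle, and the part I would write out with most care, is the uniqueness of the inversion in (ii): one has to verify that the tree-child-induced linear order at each vertex really forces a single gluing of children to leaf-slots, and that the factor $1/\mathrm{m}(G)$ exactly compensates for the overcounting produced by the different vertex-labellings of $G$ that yield isomorphic networks.
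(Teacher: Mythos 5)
Your proposal follows essentially the same route as the paper, which states this proposition without a separate formal proof and justifies it exactly as you do: strip the pendant non-arrow leaves from the component graph to land in $\mathcal{D}_{k+1}$, fill each vertex $v$ with a one-component network counted by $M_{\ell+c(v),c_1(v)}$, combine the vertices as a labelled product of exponential generating functions, and divide by the symmetry count ${\rm m}(G)$. One small point to write carefully: the canonical order on the children of $v$ must be taken (as in Section~\ref{cgm}) with respect to the smallest leaf reachable \emph{only} through that child --- the plain ``smallest descendant leaf'' can tie when two children share a reticulation descendant --- and it is the tree-child property of $\tilde{C}(N)$ (equivalently, visibility) that forces $\ell\geq 1$ exactly when $c_1(v)=0$, which is the content of your $\ell_0$.
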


\tikzset{
    styNode/.style={
        circle, draw, line width=1pt, inner sep = 2pt, font=\scriptsize
    },
}

\begin{figure}[ht]
    \newcommand{\doubleEdge}[2]{
        \draw[line width=1.5pt, decoration={markings,mark=at position 0.6 with {\arrow{latex[length=3mm]}}}, postaction={decorate}] (#1) -- (#2);
    }
    \newcommand{\singleEdge}[2]{
        \draw[line width=1.5pt] (#1) -- (#2);
    }
    \centering
    \begin{subfigure}[b]{0.3\textwidth}
        \centering
        \begin{tikzpicture}
            \node[styNode,label=right:$a_1$] (A1) at (  0   ,  0) {};
            \node[styNode,label=right:$a_2$] (A2) at ( -1.15, -2) {};
            \node[styNode,label=right:$a_3$] (A3) at (  1.15, -2) {};

            \doubleEdge{A1}{A2}
            \doubleEdge{A1}{A3}
        \end{tikzpicture}
        \caption*{$A$}
    \end{subfigure}
    \begin{subfigure}[b]{0.3\linewidth}
        \centering
        \begin{tikzpicture}
            \node[styNode,label=right:$b_1$] (B1) at ( 0,  0) {};
            \node[styNode,label=right:$b_2$] (B2) at ( 0, -2) {};
            \node[styNode,label=right:$b_3$] (B3) at ( 0, -4) {};

            \doubleEdge{B1}{B2}
            \doubleEdge{B2}{B3}
        \end{tikzpicture}
        \caption*{$B$}
    \end{subfigure}
    \begin{subfigure}[b]{0.3\linewidth}
        \centering
        \begin{tikzpicture}
            \node[styNode,label=right:$c_1$] (C1) at (  0   ,  0) {};
            \node[styNode,label=right:$c_2$] (C2) at ( -1.15, -2) {};
            \node[styNode,label=right:$c_3$] (C3) at (  0   , -4) {};

            \doubleEdge{C1}{C2}
            \singleEdge{C1}{C3}
            \singleEdge{C2}{C3}
        \end{tikzpicture}
        \caption*{C}
    \end{subfigure}
    \caption{The $3$ DAGs from the set $\mathcal{D}_3$ where for convenience, we have labeled the vertices. Call the reticulation vertices of the decompressed reticulation-visible networks $r_1$ and $r_2$. Then, $A$ gives all the networks where $r_1$ and $r_2$ are not in an ancestor-descendant relationship; $B$ gives all networks where $r_1$ is above $r_2$ but they are not in a tree-cycle; and $C$ gives all networks where $r_1$ is in the tree cycle of $r_2$.}
    \label{fig:compGraphOfRVk2}
\end{figure}

Now, to find ${\rm RV}_{\ell,2}$, we start from the DAGs in the set $\mathcal{D}_{3}$ which are listed in Figure~\ref{fig:compGraphOfRVk2}. Note that $m(A)=2$ (due to the symmetry about the root)$,m(B)=m(C)=1$, and for each vertex $v$, the exponential generating function $f_v(z)$ from Proposition~\ref{exp-RV} equals:
\begin{align*}
    f_{a_1}(z)
    & = F_2(z)
    = \sum_{\ell\geq 0} M_{\ell+2,2}\, \frac{z^n}{n!}
    = \frac{15}{4}\,X^{-7} - \frac{3}{2}\,X^{-5} + \frac{1}{4}\,X^{-3} + \frac{1}{2} \,X^{-1}; \\
    f_{a_2}(z)
    & = f_{a_3}(z) = f_{b_3}(z) = f_{c_3}(z)
    = F_0(z)
    = \sum_{\ell\geq 1} M_{\ell,0} \, \frac{z^{\ell}}{\ell!}
    = 1 - X; \\
    f_{b_1}(z)
    & = f_{b_2}(z)
    = F_1(z)
    = \sum_{\ell \geq 0} M_{\ell+1,1} \, \frac{z^{\ell}}{\ell!}
    = \frac{1}{2}\,X^{-3} - \frac{1}{2}\,X^{-1}; \\
    f_{c_1}(z)
    & = \sum_{\ell\geq 0} M_{\ell+2,1} \, \frac{z^{\ell}}{\ell!}
    = \frac{3}{2}\,X^{-5} - \frac{1}{2}\,X^{-3}; \\
    f_{c_2}(z)
    & = \sum_{\ell\geq 1} M_{\ell+1,0} \, \frac{z^{\ell}}{\ell!}
    = X^{-1}- 1,
\end{align*}
where we have used the abbreviation $X:=\sqrt{1-2z}$. Thus, from Proposition~\ref{exp-RV}, we have for the exponential generating function of ${\rm RV}_{\ell,2}$,
\begin{equation*}
    \frac{(3-z+7z^2-4z^3)(1-z-\sqrt{1-2z})}{(1-2z)^{7/2}}
    = \frac{(1-X)^2(15-6X^2+X^4+2X^6)}{8X^7}.
\end{equation*}
What is left is to extract coefficients which can be done with the following lemma.
\begin{lmm}\label{coef-Xd}
\newcommand{\ds}{\displaystyle}
We have, for $n$ large enough,
\begin{center}$$[z^n] X^{d} = \left\{\begin{array}{cl}
    \ds 0,
        & \text{if } d \geq 0 \text{ and } d \text{ is even, } k := \frac{d}{2}; \\
    \ds (-1)^{k+1}(2k+1)!!\frac{(2n-2k-3)!!}{n!},
        & \text{if } d \geq 0 \text{ and } d \text{ is odd, } k := \frac{d-1}{2}; \\
    \ds 2^n\,\binom{n+k-1}{k-1},
        & \text{if } d < 0 \text{ and } d \text{ is even, }  k := -\frac{d}{2}; \\
    \ds \frac{1}{(2k-3)!!}\frac{(2n+2k-3)!!}{n!},
        & \text{if } d < 0 \text{ and } d \text{ is odd, }  k := -\frac{d-1}{2}.\\
\end{array}\right.$$
\end{center}
\end{lmm}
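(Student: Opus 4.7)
The plan is to apply the generalized binomial theorem directly to $X^d = (1-2z)^{d/2}$, so that
\[
[z^n] X^d = \binom{d/2}{n}(-2)^n,
\]
and then to translate the resulting falling factorial into double factorials, splitting according to the sign and parity of $d$.

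Two of the four cases should be essentially immediate. When $d \geq 0$ is even, $X^d$ is a polynomial of degree $d/2$, so $[z^n] X^d$ vanishes for all $n$ large enough. When $d < 0$ is even, say $d = -2k$, the standard identity $\binom{-k}{n}(-2)^n = 2^n \binom{n+k-1}{k-1}$ yields the claim at once. I would dispatch these two first.

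The odd cases will require more bookkeeping. For $d = 2k+1 \geq 0$, I would clear denominators in the falling factorial by writing
\[
\Bigl(k+\tfrac{1}{2}\Bigr)\Bigl(k-\tfrac{1}{2}\Bigr)\cdots\Bigl(k-n+\tfrac{3}{2}\Bigr) = \frac{(2k+1)(2k-1)\cdots(2k-2n+3)}{2^n}.
\]
For $n \geq k+1$ this product contains $k+1$ positive factors (whose product is $(2k+1)!!$) and $n-k-1$ negative factors (whose absolute values assemble into $(2n-2k-3)!!$); combining them with the overall $(-2)^n$ should leave precisely the stated expression, with the $(-1)^{k+1}$ coming from the parity of the negative block. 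For $d = -(2k-1)$ with $k \geq 1$, every factor in the falling factorial is negative, and after extracting the signs the product becomes $(2k-1)(2k+1)\cdots(2n+2k-3) = (2n+2k-3)!!/(2k-3)!!$, which matches the claim after multiplication by $(-2)^n$.

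The hard part will not be conceptual but arithmetic: keeping the signs and the two double-factorial conventions straight, especially in the nonnegative odd case where the falling factorial changes sign in the middle (and where the threshold on $n$ for the formula to hold is determined). Once the sign bookkeeping is carried out carefully, each of the four formulas follows by inspection.
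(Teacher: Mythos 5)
Your proposal is correct: expanding $X^d=(1-2z)^{d/2}$ by the generalized binomial theorem and converting the half-integer falling factorials into double factorials (with the sign split at the $k+1$ positive factors in the nonnegative odd case) yields exactly the four stated formulas, valid once $n$ exceeds $\max\{d/2,k+1\}$ so the degenerate conventions $(-1)!!=1$ apply. The paper states Lemma~\ref{coef-Xd} without proof, treating it as a routine coefficient extraction, and your computation is precisely the standard verification the authors left implicit.
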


\vspace*{0.2cm}\begin{proof}
All cases are obtained by the binomial theorem and standard computations.
\end{proof}

Applying the lemma gives the following result.
\[
{\rm RV}_{\ell,2}=\frac{6\ell^4+7\ell^3+6\ell^2-\ell-3}{3}(2\ell-3)!!-2^{\ell-1}(2\ell^2+2\ell+1)\ell!.
\]

\tikzset{
    styNode/.style={
        circle, draw, line width=1pt, inner sep = 2pt, font=\scriptsize
    },
}

\begin{figure}[t]
\newcommand{\doubleEdge}[2]{
        \draw[line width=1.5pt, decoration={markings,mark=at position 0.6 with {\arrow{latex[length=3mm]}}}, postaction={decorate}] (#1) -- (#2);
    }
\newcommand{\singleEdge}[2]{
    \draw[line width=1.5pt] (#1) -- (#2);
}
\newcommand{\bendEdge}[3]{
    \draw[line width=1.5pt] (#2) to [bend right=#1] (#3);
}
\newcommand{\parH}{1.5}
\newcommand{\parh}{0.622*\parH}
\centering
\begin{subfigure}[b]{0.24\textwidth}
    \centering
    \begin{tikzpicture}
        \node[styNode] (A) at (  0,  0) {};
        \node[styNode] (B) at ($(A) + (-120:\parH)$) {};
        \node[styNode] (C) at ($(A) + (-90:\parH)$) {};
        \node[styNode] (D) at ($(A) + (-60:\parH)$) {};

        \doubleEdge{A}{B}
        \doubleEdge{A}{C}
        \doubleEdge{A}{D}
    \end{tikzpicture}
    \caption*{$A_1$}
\end{subfigure}
\begin{subfigure}[b]{0.24\textwidth}
    \centering
    \begin{tikzpicture}
        \node[styNode] (A) at (  0,  0) {};
        \node[styNode] (B) at ($(A) + (-90:\parH)$) {};
        \node[styNode] (C) at ($(B) + (-120:\parH)$) {};
        \node[styNode] (D) at ($(B) + (-60:\parH)$) {};

        \doubleEdge{A}{B}
        \doubleEdge{B}{C}
        \doubleEdge{B}{D}
    \end{tikzpicture}
    \caption*{$A_2$}
\end{subfigure}
\begin{subfigure}[b]{0.24\textwidth}
    \centering
    \begin{tikzpicture}
        \node[styNode] (A) at (  0,  0) {};
        \node[styNode] (B) at ($(A) + (-120:\parH)$) {};
        \node[styNode] (C) at ($(A) + (-60:\parH)$) {};
        \node[styNode] (D) at ($(C) + (-90:\parH)$) {};

        \doubleEdge{A}{B}
        \doubleEdge{A}{C}
        \doubleEdge{C}{D}
    \end{tikzpicture}
    \caption*{$A_3$}
\end{subfigure}
\begin{subfigure}[b]{0.24\textwidth}
    \centering
    \begin{tikzpicture}
        \node[styNode] (A) at (  0,  0) {};
        \node[styNode] (B) at ($(A) + (-90:\parh)$) {};
        \node[styNode] (C) at ($(B) + (-90:\parh)$) {};
        \node[styNode] (D) at ($(C) + (-90:\parh)$) {};

        \doubleEdge{A}{B}
        \doubleEdge{B}{C}
        \doubleEdge{C}{D}
    \end{tikzpicture}
    \caption*{$A_4$}
\end{subfigure}
\begin{subfigure}[b]{0.24\textwidth}
    \centering
    \begin{tikzpicture}
        \node[styNode] (A) at (  0,  0) {};
        \node[styNode] (B) at ($(A) + (-90:\parH)$) {};
        \node[styNode] (C) at ($(B) + (-120:\parH)$) {};
        \node[styNode] (D) at ($(B) + (-60:\parH)$) {};

        \doubleEdge{A}{B}
        \bendEdge{15}{A}{C}
        \singleEdge{B}{C}
        \bendEdge{-15}{A}{D}
        \singleEdge{B}{D}
    \end{tikzpicture}
    \caption*{$B_1$}
\end{subfigure}
\begin{subfigure}[b]{0.24\textwidth}
    \centering
    \begin{tikzpicture}
        \node[styNode] (A) at (  0,  0) {};
        \node[styNode] (B) at ($(A) + (-90:\parH)$) {};
        \node[styNode] (C) at ($(B) + (-120:\parH)$) {};
        \node[styNode] (D) at ($(B) + (-60:\parH)$) {};

        \doubleEdge{A}{B}
        \bendEdge{15}{A}{C}
        \singleEdge{B}{C}
        \doubleEdge{B}{D}
    \end{tikzpicture}
    \caption*{$B_2$}
\end{subfigure}
\begin{subfigure}[b]{0.24\textwidth}
    \centering
    \begin{tikzpicture}
        \node[styNode] (A) at (  0,  0) {};
        \node[styNode] (B) at ($(A) + (-120:\parH)$) {};
        \node[styNode] (C) at ($(A) + (-60:\parH)$) {};
        \node[styNode] (D) at ($(B) + (-60:\parH)$) {};

        \doubleEdge{A}{B}
        \doubleEdge{A}{C}
        \singleEdge{A}{D}
        \singleEdge{B}{D}
    \end{tikzpicture}
    \caption*{$B_3$}
\end{subfigure}
\begin{subfigure}[b]{0.24\textwidth}
    \centering
    \begin{tikzpicture}
        \node[styNode] (A) at (  0,  0) {};
        \node[styNode] (B) at ($(A) + (-120:\parH)$) {};
        \node[styNode] (C) at ($(A) + (-60:\parH)$) {};
        \node[styNode] (D) at ($(B) + (-60:\parH)$) {};

        \doubleEdge{A}{B}
        \doubleEdge{A}{C}
        \singleEdge{B}{D}
        \singleEdge{C}{D}
    \end{tikzpicture}
    \caption*{$B_4$}
\end{subfigure}
\begin{subfigure}[b]{0.19\textwidth}
    \centering
    \begin{tikzpicture}
        \node[styNode] (A) at (  0,  0) {};
        \node[styNode] (B) at ($(A) + (-90:\parh)$) {};
        \node[styNode] (C) at ($(B) + (-90:\parh)$) {};
        \node[styNode] (D) at ($(C) + (-90:\parh)$) {};

        \doubleEdge{A}{B}
        \bendEdge{30}{A}{C}
        \singleEdge{B}{C}
        \doubleEdge{C}{D}
    \end{tikzpicture}
    \caption*{$B_5$}
\end{subfigure}
\begin{subfigure}[b]{0.19\textwidth}
    \centering
    \begin{tikzpicture}
        \node[styNode] (A) at (  0,  0) {};
        \node[styNode] (B) at ($(A) + (-90:\parh)$) {};
        \node[styNode] (C) at ($(B) + (-90:\parh)$) {};
        \node[styNode] (D) at ($(C) + (-90:\parh)$) {};

        \doubleEdge{A}{B}
        \bendEdge{30}{A}{C}
        \singleEdge{B}{C}
        \bendEdge{-30}{B}{D}
        \singleEdge{C}{D}
    \end{tikzpicture}
    \caption*{$B_6$}
\end{subfigure}
\begin{subfigure}[b]{0.19\textwidth}
    \centering
    \begin{tikzpicture}
        \node[styNode] (A) at (  0,  0) {};
        \node[styNode] (B) at ($(A) + (-90:\parh)$) {};
        \node[styNode] (C) at ($(B) + (-90:\parh)$) {};
        \node[styNode] (D) at ($(C) + (-90:\parh)$) {};

        \doubleEdge{A}{B}
        \bendEdge{30}{A}{C}
        \singleEdge{B}{C}
        \bendEdge{-30}{A}{D}
        \singleEdge{C}{D}
    \end{tikzpicture}
    \caption*{$B_7$}
\end{subfigure}
\begin{subfigure}[b]{0.19\textwidth}
    \centering
    \begin{tikzpicture}
        \node[styNode] (A) at (  0,  0) {};
        \node[styNode] (B) at ($(A) + (-90:\parh)$) {};
        \node[styNode] (C) at ($(B) + (-90:\parh)$) {};
        \node[styNode] (D) at ($(C) + (-90:\parh)$) {};

        \doubleEdge{A}{B}
        \doubleEdge{B}{C}
        \bendEdge{30}{B}{D}
        \singleEdge{C}{D}
    \end{tikzpicture}
    \caption*{$B_8$}
\end{subfigure}
\begin{subfigure}[b]{0.19\textwidth}
    \centering
    \begin{tikzpicture}
        \node[styNode] (A) at (  0,  0) {};
        \node[styNode] (B) at ($(A) + (-90:\parh)$) {};
        \node[styNode] (C) at ($(B) + (-90:\parh)$) {};
        \node[styNode] (D) at ($(C) + (-90:\parh)$) {};

        \doubleEdge{A}{B}
        \doubleEdge{B}{C}
        \bendEdge{-30}{A}{D}
        \singleEdge{C}{D}
    \end{tikzpicture}
    \caption*{$B_9$}
\end{subfigure}
\vspace*{0.1cm}
\caption{The $13$ DAGs from the set $\mathcal{D}_4$.}
\label{fig:compGraphOfRVk3}
\end{figure}

We next consider $k=3$. Here the set $\mathcal{D}_4$ has $13$ elements; see Figure~\ref{fig:compGraphOfRVk3}. Note that these DAGs fall into two types: the $A_j$'s are tree structures and the $B_j$'s are not. (The latter generate the reticulation-visible  networks which are not galled networks.) Next, we note that $m(A_1)=6, m(A_2)=m(B_1)=m(B_4)=2$ and the values are $1$ in all other cases. Thus, by Proposition~\ref{exp-RV}, we obtain for the exponential generating function $f_A(z)$ arising by the $A_j$'s,
\begin{multline*}
    f_A(z) = \frac{4z^3(29+12z+29z^2-37z^3+36z^4-14z^5)}{(1-2z)^{11/2}\,(1+\sqrt{1-2z})^3} \\
    + \frac{6z^3(3-z+7z^2-4z^3)}{(1-2z)^5\,(1+\sqrt{1-2z})^2} + \frac{2z^4}{(1-2z)^{9/2}\,(1+\sqrt{1-2z})},
\end{multline*}
and for the exponential generating function $f_B(z)$ arising from the $B_j$'s,
\[
f_B(z) = \frac{(1-X)^2(258-105X-153X^2-16X^3+26X^4+7X^5+3X^6-2X^7-2X^8)}{8\,X^{10}}.
\]
Then, by extracting coefficients of $f_A(z)+f_B(z)$ with Lemma~\ref{coef-Xd},
\begin{align*}
{\rm RV}_{\ell,3}=&\frac{4\ell^6+20\ell^5+33\ell^4-32\ell^3-76\ell^2+12\ell+12}{3}(2\ell-3)!!\\
&\qquad-2^{\ell-4}\frac{48\ell^4+175\ell^3+99\ell^2-262\ell-264}{3}\ell!.
\end{align*}

\section{Conclusion}\label{con}

In this paper, we derived exact and asymptotic counting results for the number of galled networks and reticulation-visible networks with a fixed number of reticulation vertices. For galled networks, we proposed a generating function approach which is based on the component graph method by Zhang and his coauthors. The exact results followed from it by coefficient extraction. Moreover, the asymptotic result was derived from it by methods from analytic combinatorics. For reticulation-visible networks, we also used the component graph method and generating functions to derive their exact counts. Moreover, the asymptotic result followed from previous work.

Thus, an asymptotic counting result for fixed $k$ is now known for all network classes from Figure~\ref{network-classes}; in fact, the asymptotic main term is the same for all these classes. This is slightly surprising for galled networks, since they are on a different ``branch" in the diagram in Figure~\ref{network-classes}. This suggests to consider phylogenetic networks which are both galled and satisfy the tree-child condition. This class of {\it galled tree-child networks} is also interesting from a theoretical point of view since very different methods have been used to determine the (asymptotic) numbers of tree-child and galled networks; see \cite{FuYuZh1,FuYuZh2}. We will do this in the forthcoming work \cite{ChFuYu}.

All the results in this paper were proved for fixed $k$. How about the number of networks if one sums over all possible values of $k$? The first order asymptotics of this number for galled networks is known and was obtained in \cite{FuYuZh2}. In fact, the latter also uses the component graph method. More precisely, the asymptotics was deduced from (\ref{form-GNl}). The analysis proceeded in two steps: first, the number of one-component galled networks was asymptotically studied by using the recurrence (\ref{rec-Mlk}). It turned out that $M_{\ell,k}$ has its maximum at $k=\ell$ (which is also the largest possible value of $k$) and satisfies a Poisson law. Then, in the second step, (\ref{form-GNl}) was used to find the asymptotics of ${\rm GN}_{\ell}$. This was possible because the first sum is over the set of (not necessarily binary) phylogenetic trees and the authors in \cite{FuYuZh2} found the trees whose contributions dominate the asymptotics.

In view of (\ref{form-RVl}) does this approach also work for reticulation-visible networks? First, we can skip the first step because, as mentioned in Section~\ref{cgm}, the class of one-component galled networks and one-component reticulation-visible networks coincide. Thus, in order to use the approach, we need to understand the class of (not necessarily binary) tree-child networks, which appears in the first sum of (\ref{form-RVl}). In particular, we need to know which of these networks contribute to the main term of the asymptotics.

For this, it would be good to have a better understanding of structural properties of this class. Binary tree-child networks have been counted in \cite{FuYuZh1}. However, generalizing these results to the non-binary case is non-trivial. For example, in \cite{ChFuLiWaYu1,ChFuLiWaYu2}, we considered $d$-combining tree-child networks which are tree-child networks with all internal vertices either bifurcating tree vertices or reticulation vertices with one child and exactly $d$ parents. Extending these results to general tree-child networks seems to be a major challenge. Such a generalization (and the structural knowledge it would entail) could, however, be helpful, if one wants to use (\ref{form-RVl}) to find the asymptotics of ${\rm RV}_{\ell}$.

\vspace*{-0.14cm}\section*{Acknowledgment}

We thank the two reviewers and the associate editor for helpful suggestions. Both authors acknowledge partially support by the National Science and Technology Council (NSTC), Taiwan under the grant NSTC-111-2115-M-004-002-MY2.

\vspace*{-0.14cm}

\vspace*{0.1cm}\section*{Appendix: Proof of Theorem~\ref{max-ret-RV}}

We use the component graph method from Section~\ref{cgm}. Since the component graph of a reticulation-visible network is a tree-child network with all vertices of indegree at most $2$ and no reticulation vertex  has just one child that is, moreover, a tree vertex, we first fix such a tree-child network $\tilde{C}$ (without arrows on the edges).

The maximal number of reticulation vertices a network decompressed from $\tilde{C}$ can have is obtained by placing arrows on all pendant edges except the ones directly below reticulation vertices. Let $r(\tilde{C})$ be the number of these edges plus the number of reticulation vertices of $\tilde{C}$ plus the number of internal vertices with exactly one incoming edge. Then, our goal is to find those ${\tilde{C}}$ which maximize $r(\tilde{C})$. Note that $r(\tilde{C})$ remains invariant if we replace vertices with indegree $2$ and outdegree at least $2$ by a reticulation vertex followed by a tree vertex and do not count the additional created edge. This is the set of $\tilde{C}$, we will consider in the sequel. (Thus, for the decompressing procedure, we first have to merge reticulation vertices followed by just one tree vertices, if there are any such vertices.)

We start with the following claim.

\vspace*{0.35cm}\noindent{\it Claim 1:} $r(\tilde{C})$ is maximized only for binary tree-child networks $\tilde{C}$.

\vspace*{0.3cm} Assume that $\tilde{C}$ has at least one vertex, say $v$, with outdegree $\geq3$. Replacing $v$ with a vertex of outdegree $2$ and attaching to it one of the children of $v$ and a vertex whose children are the remaining children of $v$ clearly gives a new tree-child network with $r(\tilde{C})$ increased by $1$ (since we have created a new internal vertex with indegree $1$). By iterating this procedure, we end up with a tree-child network which is binary. This proves our claim.

\newpage



\begin{figure}[t!]
\centering
\tikzset{
    styNode/.style={
        circle, draw, line width=1pt, inner sep = 2pt, font=\scriptsize
    },
    smallNode/.style={
        circle, draw, inner sep=0pt, minimum size=3pt
    }
}
\newcommand{\doubleEdge}[2]{
    \draw[line width=1.2pt, decoration={markings,mark=at position 0.7 with {\arrow{latex[length=3mm]}}}, postaction={decorate}] (#1) -- (#2);
}
\newcommand{\singleEdge}[2]{
    \draw[line width=1.2pt] (#1) -- (#2);
}
\newcommand{\bendEdge}[3]{
    \draw[line width=1.2pt] (#2) to [bend right=#1] (#3);
}
\newcommand{\parH}{1.5}
\newcommand{\parh}{0.5*\parH}

\newcommand{\myTheta}{30}
\newcommand{\addLeaf}[5]{
    \node[line width=1pt,inner sep = 1pt, font=\scriptsize] (#2) at ($(#1) + (-90+#4:#5)$) {#3};
    \singleEdge{#1}{#2}
}
\newcommand{\addRetiLeaf}[5]{
    \node[line width=1pt,inner sep = 1pt, font=\scriptsize] (#2) at ($(#1) + (-90+#4:#5)$) {#3};
    \doubleEdge{#1}{#2}
}

\begin{subfigure}[b]{0.44\textwidth}
    \centering
    \begin{tikzpicture}
        \node[smallNode] (a1) at (0,0) {};
        \node[smallNode] (a2) at ($(a1) + (-90:0.5*\parh)$) {};
        \node[smallNode] (a3) at ($(a2) + (-90+\myTheta:1.0*\parh)$) {};
        \node[smallNode] (b1) at ($(a2) + (-90-\myTheta:1.5*\parh)$) {};
        \node[smallNode] (b2) at ($(b1) + (-90:0.5*\parh)$) {};
        \node[smallNode] (b3) at ($(b2) + (-90+\myTheta:1.0*\parh)$) {};
        \node[smallNode] (c1) at ($(b2) + (-90-\myTheta:1.5*\parh)$) {};
        \node[smallNode] (d1) at ($(a3) + (-90+\myTheta:2.5*\parh)$) {};

        \foreach \x/\y in {
            a1/a2,a2/a3,
            a3/b1,b1/b2,b2/b3,
            b3/c1,
            a3/d1,b3/d1}{
            \singleEdge{\x}{\y}
        }
        \foreach \t/\x/\y in {
            1/a2/b1,1/b2/c1}{
            \bendEdge{\t*\myTheta}{\x}{\y}
        }
        \foreach \x/\n/\t in {
            c1/2/0,
            d1/1/0}{
            \addLeaf{\x}{L\n}{\n}{\t*\myTheta}{0.7*\parh}
        }
        \begin{scope}[on background layer={color=yellow}]
            \foreach \x in {
                a1, a2, a3,
                b1, b2, b3,
                d1}{
                \node[circle, fill=white!80!red, minimum size=12pt] at (\x) {};
            }
            \foreach \x/\y in {
                a1/a2,a2/a3,
                b1/b2,b2/b3}{
                \draw[draw=white!80!red, line width=12pt] (\x.center) -- (\y.center);
            }
        \end{scope}
    \end{tikzpicture}
    \caption*{$N$}
\end{subfigure}
\renewcommand{\myTheta}{30}
\newcommand{\deltaH}{0.2}
\begin{subfigure}[b]{0.44\textwidth}
    \centering
    \begin{tikzpicture}
        \node[styNode] (A) at (  0,  0) {};
        \node[styNode] (B) at ($(A) + (-90-\myTheta:\parH)$) {};
        \node[styNode] (D) at ($(A) + (-90+\myTheta:1.5*\parH)$) {};
        \foreach \x/\y in {
            A/B}{
            \doubleEdge{\x}{\y}
        }
        \foreach \x/\y in {
            A/D,B/D}{ 
            \singleEdge{\x}{\y}
        }
        \foreach \x/\n/\t in {
            D/1/0}{
            \addLeaf{\x}{L\n}{\n}{\t*\myTheta}{\parh}
        }
        \foreach \x/\n/\t in {
            B/2/0}{
            \addRetiLeaf{\x}{L\n}{\n}{\t*\myTheta}{\parh}
        }

        \node[] (TOP) at ($(A) + ( 90:\deltaH)$) {};
        \node[] (BTM) at ($(L1) + (-90:\deltaH)$) {};

    \end{tikzpicture}
    \caption*{$C(N)$}
\end{subfigure}
\vspace*{0.15cm}\caption{Left: The smallest example of a maximal reticulated reticulation-visible network with $2$ leaves (with the tree-components highlighted). Right: The component graph of the network; note that it is a maximal reticulated binary tree-child network.}
\label{fig:example_maximal_reticulated_RV_V2}
\end{figure}

\begin{figure}[H]
\centering
\tikzset{
    styNode/.style={
        circle, draw, line width=1pt, inner sep = 2pt, font=\scriptsize
    },
    smallNode/.style={
        circle, draw, inner sep=0pt, minimum size=3pt, fill=black
    }
}
\newcommand{\doubleEdge}[2]{
    \draw[line width=1.2pt, decoration={markings,mark=at position 0.7 with {\arrow{latex[length=3mm]}}}, postaction={decorate}] (#1) -- (#2);
}
\newcommand{\singleEdge}[2]{
    \draw[line width=1.2pt] (#1) -- (#2);
}
\newcommand{\bendEdge}[3]{
    \draw[line width=1.2pt] (#2) to [bend right=#1] (#3);
}
\newcommand{\parH}{1.5}
\newcommand{\parh}{0.5*\parH}

\newcommand{\myTheta}{30}
\newcommand{\addLeaf}[5]{
    \node[line width=1pt,inner sep = 1pt, font=\scriptsize] (#2) at ($(#1) + (-90+#4:#5)$) {#3};
    \singleEdge{#1}{#2}
}
\newcommand{\addRetiLeaf}[5]{
    \node[line width=1pt,inner sep = 1pt, font=\scriptsize] (#2) at ($(#1) + (-90+#4:#5)$) {#3};
    \doubleEdge{#1}{#2}
}

\begin{subfigure}[b]{0.44\textwidth}
    \centering
    \begin{tikzpicture}
        \node[smallNode] (a1) at (0,0) {};
        \node[smallNode] (a2) at ($(a1) + (-90:0.5*\parh)$) {};
        \node[smallNode] (a3) at ($(a2) + (-90-\myTheta:1.0*\parh)$) {};
        \node[smallNode] (b1) at ($(a2) + (-90+\myTheta:1.5*\parh)$) {};
        \node[smallNode] (b2) at ($(b1) + (-90:0.5*\parh)$) {};
        \node[smallNode] (b3) at ($(b2) + (-90-\myTheta:1.0*\parh)$) {};
        \node[smallNode] (c1) at ($(b2) + (-90+\myTheta:1.5*\parh)$) {};
        \node[smallNode] (c2) at ($(c1) + (-90:0.5*\parh)$) {};
        \node[smallNode] (c3) at ($(c2) + (-90+\myTheta:1.0*\parh)$) {};
        \node[smallNode] (d1) at ($(c2) + (-90-\myTheta:1.5*\parh)$) {};
        \node[smallNode] (d2) at ($(d1) + (-90:0.5*\parh)$) {};
        \node[smallNode] (d3) at ($(d2) + (-90-\myTheta:1.0*\parh)$) {};
        \node[smallNode] (e1) at ($(d3) + (-90-\myTheta:1.0*\parh)$) {};
        \node[smallNode] (f1) at ($(d2) + (-90+\myTheta:1.5*\parh)$) {};
        \node[smallNode] (f2) at ($(f1) + (-90:0.5*\parh)$) {};
        \node[smallNode] (f3) at ($(f2) + (-90-\myTheta:1.0*\parh)$) {};
        \node[smallNode] (g1) at ($(f2) + (-90+\myTheta:1.5*\parh)$) {};
        \node[smallNode] (g2) at ($(g1) + (-90:0.5*\parh)$) {};
        \node[smallNode] (g3) at ($(g2) + (-90+\myTheta:1.0*\parh)$) {};
        \node[smallNode] (h1) at ($(f3) + (-90-\myTheta:2.0*\parh)$) {};
        \node[smallNode] (i1) at ($(g2) + (-90-\myTheta:1.5*\parh)$) {};
        \node[smallNode] (j1) at ($(g3) + (-90+\myTheta:1.5*\parh)$) {};

        \foreach \x/\y in {
            a1/a2,a2/a3,
            a3/b1,b1/b2,b2/b3,
            b3/c1,c1/c2,c2/c3,
            c3/d1,d1/d2,d2/d3,
            d3/e1,
            d3/f1,f1/f2,f2/f3,
            f3/g1,g1/g2,g2/g3,
            f3/h1,
            g3/i1,
            g3/j1}{
            \singleEdge{\x}{\y}
        }

        \foreach \t/\x/\y in {
            -1/a2/b1,
            -1/b2/c1,
            1/c2/d1,
            -1/d2/f1,
            -1/f2/g1,
            1/g2/i1,
            1/a3/h1,
            1/b3/e1,
            -1/c3/j1}{
            \bendEdge{\t*\myTheta}{\x}{\y}
        }

        \foreach \x/\n/\t in {
            e1/4/0,
            h1/3/0,
            i1/1/0,
            j1/2/0}{
            \addLeaf{\x}{L\n}{\n}{\t*\myTheta}{0.7*\parh}
        }

        \begin{scope}[on background layer={color=yellow}]
            \foreach \x in {
                a1, a2, a3,
                b1, b2, b3,
                c1, c2, c3,
                d1, d2, d3,
                f1, f2, f3,
                g1, g2, g3}{
                \node[circle, fill=white!80!red, minimum size=12pt] at (\x) {};
            }
            \foreach \x/\y in {
                a1/a2,a2/a3,
                b1/b2,b2/b3,
                c1/c2,c2/c3,
                d1/d2,d2/d3,
                f1/f2,f2/f3,
                g1/g2,g2/g3}{
                \draw[draw=white!80!red, line width=12pt] (\x.center) -- (\y.center);
            }
        \end{scope}
    \end{tikzpicture}
    \caption*{$N$}
\end{subfigure}
\renewcommand{\myTheta}{30}
\newcommand{\deltaH}{0.5}
\begin{subfigure}[b]{0.44\textwidth}
    \centering
    \begin{tikzpicture}
        \node[styNode] (A) at (  0,  0) {};
        \node[styNode] (B) at ($(A) + (-90+\myTheta:\parH)$) {};
        \node[styNode] (C) at ($(B) + (-90+\myTheta:\parH)$) {};
        \node[styNode] (D) at ($(C) + (-90-\myTheta:\parH)$) {};
        \node[styNode] (E) at ($(D) + (-90-\myTheta:\parH)$) {};
        \node[styNode] (F) at ($(D) + (-90+\myTheta:\parH)$) {};
        \node[styNode] (G) at ($(F) + (-90+\myTheta:\parH)$) {};
        \node[styNode] (H) at ($(F) + (-90-\myTheta:2*\parH)$) {};
        \node[styNode] (I) at ($(G) + (-90+\myTheta:\parH)$) {};

        \foreach \x/\y in {
            A/B,B/C,C/D,D/F,F/G}{
            \doubleEdge{\x}{\y}
        }

        \foreach \x/\y in {
            D/E,
            F/H,
            G/I}{ 
            \singleEdge{\x}{\y}
        }

        \foreach \x/\n/\t in {
            E/4/0,
            H/3/0,
            I/2/0}{
            \addLeaf{\x}{L\n}{\n}{\t*\myTheta}{\parh}
        }
        \foreach \x/\n/\t in {
            G/1/-1}{
            \addRetiLeaf{\x}{L\n}{\n}{\t*\myTheta}{\parh}
        }

        \foreach \t/\x/\y in {
            1/A/H,1/B/E,-1/C/I}{
            \bendEdge{\t*\myTheta}{\x}{\y}
        }

        \node[] (TOP) at ($(A) + ( 90:\deltaH)$) {};
        \node[] (BTM) at ($(L3) + (-90:\deltaH)$) {};

    \end{tikzpicture}
    \caption*{$C(N)$}
\end{subfigure}
\vspace*{0.15cm}\caption{A maximal reticulated reticulation-visible network with $4$ leaves and $9$ reticulation vertices. The component graph is a maximal reticulated binary tree-child network. Note that $N$ is the only network obtained by decompressing $C(N)$.}
\label{fig:example_maximal_reticulated_RV_V1}
\end{figure}

\vspace*{0.35cm}\noindent{\it Claim 2:} The maximum of $r(\tilde{C})$ over the set of all binary tree-child networks is $3\ell-3$ and this bound is achieved if and only if $\tilde{C}$ is a maximal reticulated binary tree-child network.

\vspace*{0.3cm} Note that in any binary tree-child network, we have $\ell+k=t+2$, where $t$ is the number of tree vertices; see Section~1 in \cite{FuYuZh1}. Thus,
\[
r(\tilde{C})=\ell+k+t-k=2\ell+k-2.
\]
Since $k\leq\ell-1$ (see Section~\ref{intro}) with this bound achieved exactly by the maximal reticulated tree-child networks, the claim follows.

Overall, we have proved so far that the maximal number of reticulation vertices of a reticulation-visible network is $3\ell-3$ and this bound is achieved if and only if the component graph of the network is a maximal reticulated binary tree-child network. Finally, the tree vertices of these networks have one child which is a reticulation vertex and one child which is not; see Lemma~1 in \cite{FuYuZh1}. Thus, they are replaced by a one-component network which has $2$ leaves exactly one of which is below a reticulation vertex. However, the number of these one-component networks is $M_{2,1}=1$. Consequently, the decompression of every maximal reticulated binary tree-child network gives exactly one reticulation-visible network; see Figure~\ref{fig:example_maximal_reticulated_RV_V2} for the smallest example (see also \cite{BoSe}) and Figure~\ref{fig:example_maximal_reticulated_RV_V1} for a larger example.\qed
\end{document}